\documentclass{amsart}
\usepackage{amsmath}
\usepackage{mystyle}
\usepackage{tabu}
\usepackage{largearray}
\usepackage{manfnt}
\title[The $P_2^1$ Margolis homology of $\tmf$]{The $P_2^1$ Margolis homology of connective topological modular forms}
\author[Prasit Bhattacharya]{Prasit Bhattacharya}
\address{Department of Mathematics, University of Virginia, Kerchoff hall, Charlottesville, VA 22904}
\email{pb9wh@virginia.edu}
\author[Irina Bobkova]{Irina Bobkova}
\address{Department of Mathematics, Texas A\&M University, College Station, TX 77843}
\email{ibobkova@math.tamu.edu}
\author[Brian Thomas]{Brian Thomas}
%\address{Department of Mathematics, University of Virginia, Kerchoff hall, Charlottesville, VA 22904}
\email{bt3hy@virginia.edu}

\newcommand{\FF}{\mathbb{F}}
\newcommand{\W}{\mathcal{W}}
\newcommand{\heart}{\ensuremath\heartsuit}

\newcommand{\e}[1]{\overline{#1}}

\begin{document} 
\setcounter{tocdepth}{1}
%\keywords{Steenrod algebra, Margolis homology, topological modular forms}

\begin{abstract} 
The element $\Pt$ of the mod $2$ Steenrod algebra $\Steen$ has the property $(\Pt)^2=0$. This property allows one to view $\Pt$ as a differential on $H_*(X, \FF_2)$ for any spectrum $X$. 
Homology with respect to this differential, $\MM(X, \Pt)$, is called the $\Pt$ Margolis homology of $X$. In this paper we give a complete calculation of the $\Pt$ Margolis homology of the $2$-local spectrum of topological modular forms $\tmf$ and identify its $\mathbb{F}_2$ basis via an iterated algorithm. We apply the same techniques to calculate $\Pt$ Margolis homology for any smash power of $\tmf$.
\end{abstract}

\maketitle
\tableofcontents  
\begin{conv*}Throughout this paper we work in the stable homotopy category of spectra localized at the prime $2$.  
\end{conv*}

\section{Introduction} \label{Sec:intro}
The connective $E_{\infty}$ ring \emph{spectrum of topological modular forms} $\tmf$ has played a vital role in computational aspects of chromatic homotopy theory over the last two decades \cite{MR2648680}, \cite{tmfbook}. It is essential for detecting information about the chromatic height $2$, and it has the rare quality of having rich Hurewicz image. There is a $K(2)$-local equivalence \cite{HopkinsMahowald} \[L_{K(2)}\tmf \simeq E_2^{hG_{48}} \]
 where $E_2$ is the second Morava $E$-theory at $p=2$ and  $G_{48}$ is the maximal finite subgroup of the Morava stabilizer group $\mathbb{G}_2.$  The spectrum $E_2^{hG_{48}}$ can be used to build the $K(2)$-local sphere spectrum  (see~\cite{BobkovaGoerss}).  The homotopy groups of $\tmf$ approximate both
the stable homotopy groups of spheres and the ring of integral modular forms. In many senses, $\tmf$ is the chromatic height $2$ analogue of connective real $K$-theory $\mathit{ko}$. Further, the homotopy groups of $\tmf$ are completely known \cite{tbauer}. 
 
Let us now recall the definition of the element $\Pt \in \Steen$. Milnor described the mod $2$ dual Steenrod algebra  $\Steen_*$ as the graded polynomial algebra \cite[App. 1]{Milnor_Steenrod} 
\[ \Steen_* \cong \Ft[\xi_1, \xi_2, \xi_3, \dots],\]
where $|\xi_i| = 2^{i} - 1$.   The Steenrod algebra $\Steen$ has an $\FF_2$-basis dual to the monomial basis of $\Steen_*$. The elements of the $\FF_2$-basis of $\Steen$ which are dual to $\xi_t^{2^s}$ are denoted by $\PP_t^s$, and the elements $\PP_t^0$ are denoted by $\Q_{t-1}$. 
 When $s<t$, the elements $\PP_t^s$ are exterior power generators, i.e. $(\PP_t^s)^2 = 0$. Thus, any left $\Steen$-module $K$ 
 can be regarded as a complex with differential given by the left multiplication by $\PP_t^s$ (for $s < t$). This leads to the following definition.
 \begin{defn}[\cite{Margolis_book}] \label{margolis}
 Let $K$ be any left  $\Steen$-module and $0 \leq s<t$.  Let $${^{L}\mathcal{P}}_t^s:K \to K$$ denote the left action by $\PP_t^s$. The \emph{left $\PP_t^s$ Margolis homology group} of $K$, $\MM^L(K, \PP_t^s)$, is defined as
\[ \MM^L(K, \PP_t^s) := \frac{\Kera{^{L}\mathcal{P}}_t^s: K \to K}{\Ima {^{L}\mathcal{P}}_t^s: K \to K}.\]
For a right $\Steen$-module $K$, one can similarly define the \emph{right $\PP_t^s$ Margolis homology group} of $K$ as
  \[ \MM^R(K, \PP_t^s) := \frac{\Kera{^{R}\mathcal{P}}_t^s: K \to K}{\Ima {^{R}\mathcal{P}}_t^s: K \to K}\] 
 where ${^{R}\mathcal{P}}_t^s$ is the right action by $\PP_t^s$ on $K$. 
 \end{defn}

\begin{notn}
For a spectrum $X$, $\MM(X, \PP_t^s)$ will denote  $\MM^L(H^*(X), \PP_t^s)$ or equivalently $\MM^R(H_*(X), \PP_t^s)$. 
\end{notn}

 Computations of Margolis homology underly many essential computations in homotopy theory. For example, Adams work on $BP\langle 1 \rangle$ cooperations \cite{A74} relies on the computations of $\MM(BP\langle 1 \rangle, \Q_i)$ for $i=0,1$. Calculations like $\MM(bo, \Q_i)$ for $i=0,1$ are essential ingredients in the work of Mahowald on $bo$-resolutions \cite{Mah81}.  More recently, Culver described $BP\langle 2 \rangle$ resolutions \cite{C} by understanding 
 $\MM(BP\langle 2 \rangle, \Q_i)$ for $i=0,1,2$. Computation of $\MM(\tmf^{\sma n}, \Q_2)$ is an essential ingredient in \cite{BBBCX}.
 
 The element $\Q_i$ is primitive for all $i \in \mathbb{N}$. In other words, the comultiplication map $\Delta$ on  $\Steen$ sends $\Q_i$ to 
 \begin{equation} \label{coprodQ}
 \Delta(\Q_i) = \Q_i \otimes 1 + 1\otimes \Q_i.
 \end{equation}
 Consequently, $\Q_i$  acts on $H_*(X)$ as a derivation, namely it follows the Leibniz rule
\[ \Q_i(x y) = \Q_i(x) \cdot y + x \cdot \Q_i(y),\]
whenever $X$ is a ring spectrum.  The Leibniz rule implies the K{\"u}nneth isomorphism \cite[Proposition~17, pg 343]{Margolis_book}
\[ \MM(X \otimes Y, \Q_i)  \cong \MM(X, \Q_i) \otimes  \MM(Y, \Q_i)\]
and hence,  $\MM(X, \Q_i)$ is an $\FF_2$ algebra whenever $X$ is a ring spectrum. As a result, computation of $\Q_i$ Margolis homology and its description is often fairly straightforward. 

 On the other hand, for $s>0$, $\PP_t^s$ is not a primitive element of $\Steen$. In particular, 
  \begin{align*}
  \Delta(\Pt) &= \Pt \otimes 1 + \Q_1 \otimes \Q_1 + 1\otimes \Pt
   \end{align*}
 and its action on $H_*(X)$ for a ring spectrum $X$, does not follow the Leibniz rule. Instead, we have
\begin{equation} \label{coprodP}
    \Pt(xy) = \Pt(x)y + \Q_1(x)\Q_1(y) + x \Pt(y).
\end{equation}
As a result, the product of two $\Pt$ cycles may not necessarily be a $\Pt$ cycle, hence $\MM(X, \Pt)$ may not admit any multiplicative structure even if $X$ is a ring spectrum.  This is the main reason why the $\Pt$ Margolis homology calculations are significantly more complicated. %Moreover, unlike in the case of $\Q_i$ Margolis homology, the K{\"u}nneth isomorphism does not hold for  $\Pt$ Margolis homology.

Let us now consider the spectrum $\tmf$. It is well-known (\cite{HopkinsMahowald}, \cite{Mathew_tmf}) that  $$H_*(\tmf; \Ft) \iso \Ft[\zeta_1^8, \zeta_2^4, \zeta_3^2, \zeta_4, \zeta_5, \dots] \subset \Steen_*$$
is a subalgebra of $\Steen_*$. Here the elements $ \zeta_i$ are the images of  $\xi_i$ under the antipode of the Hopf algebra $\Steen_*$ (see Section \ref{Sec:Margolis}). 
The right action of $\Q_i$ is given by the formula (see \cite[\S2]{C} for details)
$$\Q_i (\zeta_n) = \zeta_{n-i-1}^{2^{i+1}}.$$ 
Then, since the $\Q_i$ are derivations, it can be easily seen that 
\begin{equation}
\MM(\tmf, \Q_0) = \Ft[\zeta_1^8, \zeta_2^4]
\end{equation}
\begin{equation}
\MM(\tmf, \Q_1) = \frac{\Ft[\zeta_1^8, \zeta_3^2, \zeta_4^2, \dots]}{\langle \zeta_3^4, \zeta_4^4, \dots\rangle}
\end{equation}
\begin{equation}
\MM(\tmf, \Q_2) = \frac{\Ft[\zeta_2^4, \zeta_3^2, \zeta_4^2, \dots]}{\langle \zeta_2^8, \zeta_3^8, \zeta_4^8, \dots\rangle}.
\end{equation}
In this paper, we give a complete calculation of  $\MM(\tmf^{\sma r}, \PP_2^1)$ for arbitrary $r \geq 1$. In fact, the calculation for $r >1$ follows from the case $r =1$, because after forgetting the internal grading one can construct a non-canonical isomorphism (see Section \ref{sec:misc})
$$\MM(\tmf^{\sma r}, \PP_2^1) \cong \MM(\tmf, \PP_2^1). $$  

 For the case $r =1$,  we give an iterated algorithm (see Definition~\ref{defn:name}) that constructs an $\Ft$-basis of $\MM(\tmf, \Pt)$.  We give a complete description of $\MM(\tmf, \Pt)$ in Theorem~\ref{thm:basistmf} which is the main result of this paper. Although $\MM(\tmf, \PP_2^1)$ is not an algebra, we notice that  $\MM(\tmf, \PP_2^1)$ is a module over an infinitely generated exterior algebra $\mathcal{S}$ (see Lemma~\ref{lem:S} for a description of $\mathcal{S}$). Theorem~\ref{thm:basistmf} also describes $\MM(\tmf, \Pt)$ as an $\SS$-module. 
%The key idea in this paper is the introduction of a filtration on  $H_*(\tmf, \mathbb{F}_2)$ which  we call the `length filtration'. The length filtration results in a spectral sequence, which we call the length spectral sequence.  We notice that the $E_2$ page is an algebra because $d_0$ follows Leibniz rule. However, $d_2$ does not follow Leibniz rule.

The key tool we use is the \emph{length spectral sequence}  \eqref{LSS}, which we define in Section \ref{Sec:Margolis}.
The length spectral sequence admits a $d_0$ differential and a $d_2$ differential and collapses at the $E_3$ page. The Leibniz rule does hold for the $d_0$, but not for $d_2$. In order to work around this issue, we notice that the $E_2$ page admits an action of $\SS$ (i.e. $d_2$ are $\SS$ linear) and we use it to simplify the computation of $E_{\infty}=E_3$.

% A careful analysis, namely Lemma~\ref{d2linear}, allows us to show that $\SS$ acts on the $E_2$ page and this action allows us to simplify the computation of the $E_3$ page. 
%and let's us conclude that $\MM(\tmf, \Pt)$ admits an $\SS$-module. 

We also notice that almost identical calculations lead to a complete description of  $\MM( (B\mathbb{Z}/2^{\times n})_+, \Pt)$. The methods developed in this paper can be considered as a blueprint for computations of $P_t^1$ Margolis homology of a variety of other $\Steen$-modules. 

Our calculations of $\MM(\tmf^{\sma r}, \Pt)$ have many applications, as the spectrum $\tmf$ has a wide range of applications, particularly in chromatic homotopy theory. First note that the cohomology of $\tmf$, as a module over the Steenrod algebra $\Steen$, is isomorphic to (see \cite{HopkinsMahowald}, \cite{Mathew_tmf})
\begin{equation} \label{eqn:homologytmf}
 H^*(\tmf; \mathbb{F}_2) \cong \Steen \modmod \Steen(2) 
\end{equation}
 where $\Steen(2)$ is the subalgebra of $\Steen$ generated by $\Sq^1, \Sq^2$ and $\Sq^4$. This, and a change of rings isomorphism, imply that the $E_2$ page of the Adams spectral sequence converging to $\tmf_* X$ (for a spectrum $X$) is 
\begin{equation} \label{tmfass}
 E_2^{s,t} := Ext^{s,t}_{\Steen(2)}(H^*(X), \Ft).
 \end{equation}
One can detect infinite families in the $E_2$ page via the map $$q:Ext^{s,t}_{\Steen(2)}(H^*(X), \Ft) \to Ext^{s,t}_{\Lambda(\Pt)}(H^*(X), \Ft).$$
 The codomain of $q$ can be understood by calculating $\MM(X, \Pt)$. Note that  $$Ext^{s,t}_{\Lambda(\Pt)}(\Ft, \Ft) \iso \Ft[h_{2,1}],$$  where $|h_{2,1}| =(1,6)$ and 
 \[ \Ft[h_{2,1}] \otimes \MM(X, \Pt) \subset  Ext^{s,t}_{\Lambda(\Pt)}(H^*(X), \Ft) \] 
 accounts for all the elements with positive $s$ filtration. This shows that the knowledge of $\MM(X, \Pt)$ is crucial in detecting patterns in the $E_2$-page of \eqref{tmfass}. 

\subsection*{Motivation \rom{1} - Towards homotopy groups of $K(2)$-local sphere }  
Computation of the homotopy groups of $L_{K(n)}S^0$ --- the sphere spectrum localized with respect to Morava $K$-theories $K(n)$ at various primes $p$ and heights $n$ --- is the central question of chromatic homotopy theory. It is sometimes easier to compute $\pi_*L_{K(n)}X$ for finite complexes other than the sphere, although very little data like this is known at $n=p=2$ anyway.  Recently, Bhattacharya and Egger introduced a family of finite spectra $Z$ \cite{BhattacharyaEgger}, and $\pi_*L_{K(2)}Z$ has been computed \cite{BBBCX,BhattacharyaEgger2}, the first example of a finite complex at $p=2$ whose $K(2)$-local homotopy groups are completely determined. The finite complex $Z$ can be constructed from the sphere spectrum, by a succession of cofiber sequences of self-maps (see \cite{BhattacharyaEgger}, the last one of which is 
 \[ \Sigma^5 A_1 \sma C\nu \overset{w}\to A_1 \sma C\nu \to Z .\]
In a quest to leverage the knowledge of $\pi_*L_{K(2)}Z$ to $\pi_*L_{K(2)}S^0$, one must first attempt to compute the $K(2)$-local homotopy groups of $A_1 \sma C\nu$. Very briefly, our strategy is to use the $v_2$-local $\tmf$-based Adams spectral sequence 
\[ E_1^{r,t} = v_2^{-1}\pi_{t}(\tmf \sma \overline{\tmf}^{\sma r} \sma A_1 \sma C\nu) \Longrightarrow \pi_{t-r}(L_{K(2)} A_1 \sma C\nu) \]
and compare it with that of $Z$. One can identify the $E_1$-page of the above spectral sequence using the classical Adams spectral sequence 
\begin{equation} \label{eqn:assa1cnu}
 E_2^{s,t} = \Ext^{s,t}_{\Steen}(H^{*}(\tmf \sma \overline{ \tmf}^{\sma r} \sma A_1 \sma C\nu), \Ft) \Rightarrow \pi_{t-s} (\tmf \sma \overline{\tmf}^{\sma r} \sma A_1 \sma C\nu ). 
\end{equation}
Because of \eqref{eqn:homologytmf} and the fact that $H^{*}(A_1 \sma C\nu) \iso \Steen(2) \modmod \Lambda(\Q_2, \Pt),$ and the change of rings isomorphism,  the $E_2$-page of the spectral sequence  \eqref{eqn:assa1cnu} has the form  
\[\Ext^{s,t}_{\Lambda(\Q_2, \Pt)}(H^{*}(\overline{\tmf}^{\sma r}), \Ft)
\] 
Hence, computation of $\MM(tmf^{\wedge r}, \Pt)$ is essential for understanding the $E_2$-page of \eqref{eqn:assa1cnu}. 
 
 \subsection*{Motivation \rom{2} - $\tmf$ resolution of the sphere spectrum} The connective spectrum $\mathit{bo}$ is not a flat ring spectrum, hence the $E_2$ page of the $\mathit{bo}$-based Adams spectral sequence does not have a straightforward expression like the classical Adams spectral sequence. However,  Lellmann and Mahowald \cite{LellmannMahowald} were able to calculate the $d_1$ differentials (also see \cite{BBBCX2}) and gave a description of the ``$v_1$-periodic part'' of the $E_2$-page. They identified the free Eilenberg--MacLane summand of $\mathit{bo}^{\sma r}$. To identify this free summand one needs to identify the $\Steen(1)$ free summand of $$H^{*}(\mathit{bo}^{\sma r}) \iso \Steen \modmod \Steen(1)^{\otimes r}.$$
 This can be done by calculating $\MM(\mathit{bo}^{\sma r}, \Q_0)$ and $\MM(\mathit{bo}^{\sma r}, \Q_1)$ and using the following theorem due to Margolis.
 \begin{thm}[{\cite[Chapter 19, Theorem 6]{Margolis_book}}] \label{mar} 
 An $\Steen(n)$-module $K$ is free if and only if $\MM(K, \PP_t^s) = 0$ whenever $s + t \leq n+1$ with $s<t$. 
 \end{thm}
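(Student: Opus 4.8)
This is the theorem of Margolis quoted above, and the plan is to follow the three-step strategy of \cite{Margolis_book}. \emph{Step 1: reduce to a single exterior generator.} For $s<t$ the element $\PP_t^s$ generates an exterior (but \emph{not} sub-Hopf) subalgebra $\Lambda_{t,s}:=\FF_2[\PP_t^s]/((\PP_t^s)^2)\subset\Steen(n)$. Since $\Lambda_{t,s}\cong\FF_2[x]/(x^2)$ is an Artinian principal ideal ring, every $\Lambda_{t,s}$-module splits as a direct sum of copies of $\Lambda_{t,s}$ and of the trivial module $\FF_2$ (Jordan form in the finitely generated case, Köthe's theorem in general), and the number of trivial summands is exactly the dimension of $\MM(-,\PP_t^s)$; hence a module is $\Lambda_{t,s}$-free if and only if its $\PP_t^s$ Margolis homology vanishes. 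Thus Theorem~\ref{mar} is equivalent to the assertion that a (bounded below, as in all applications) $\Steen(n)$-module $K$ is $\Steen(n)$-free if and only if it is $\Lambda_{t,s}$-free for every $(t,s)$ with $s<t$ and $s+t\le n+1$.

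\emph{Step 2: the ``only if'' direction.} As $\MM(-,\PP_t^s)$ is additive over direct sums and a free module is a sum of shifts of $\Steen(n)$, it suffices to show $\MM(\Steen(n),\PP_t^s)=0$ for each such $(t,s)$. I would filter $\Steen(n)$ by the May (length) filtration; its associated graded is the primitively generated Hopf algebra $E^0\Steen(n)\cong\bigotimes\Lambda(\PP_i^j)$, the tensor product running over the generators present in $\Steen(n)$ --- these have pairwise distinct degrees $|\PP_i^j|=2^j(2^i-1)$, and the set of such pairs contains every $(t,s)$ with $s<t$, $s+t\le n+1$. Since $\Lambda(\PP_t^s)$ is then literally a tensor factor of $E^0\Steen(n)$, the latter is $\Lambda(\PP_t^s)$-free, so $\MM(E^0\Steen(n),\PP_t^s)=0$; running the spectral sequence of the (finite, hence convergent) filtered complex $(\Steen(n),\PP_t^s)$ forces $\MM(\Steen(n),\PP_t^s)=0$.

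\emph{Step 3: the ``if'' direction.} I would induct on $n$, the base case $n=0$ being Step~1 applied to $\Steen(0)=\Lambda(\Q_0)$. For the inductive step, choose a (automatically normal) sub-Hopf-algebra $B\subset\Steen(n)$ over which $\Steen(n)$ is free --- for instance $B=\Steen(n-1)$. The vanishing hypotheses on $K$ restrict to exactly the hypotheses for $B$, so by induction $K$ is $B$-free, and the Cartan--Eilenberg change-of-rings spectral sequence
\[
\operatorname{Tor}^{\Steen(n)\modmod B}_{p}\bigl(\FF_2,\operatorname{Tor}^{B}_{q}(\FF_2,K)\bigr)\ \Longrightarrow\ \operatorname{Tor}^{\Steen(n)}_{p+q}(\FF_2,K)
\]
collapses to $\operatorname{Tor}^{\Steen(n)}_{*}(\FF_2,K)\cong\operatorname{Tor}^{\Steen(n)\modmod B}_{*}(\FF_2,\FF_2\otimes_{B}K)$. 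Since a bounded below module over a connected graded algebra is free precisely when its higher $\operatorname{Tor}$ against $\FF_2$ vanishes, it then remains to prove that $\FF_2\otimes_{B}K$ is free over the small quotient Hopf algebra $\Steen(n)\modmod B$, which closes the induction.

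\emph{The main obstacle} is the last point of Step~3. The quotient $\Steen(n)\modmod B$ acquires, in its associated graded, exterior generators $\PP_i^j$ with $j\ge i$ about which Theorem~\ref{mar} assumes nothing (and for which no Margolis homology is even defined in $\Steen(n)$ itself), so one cannot simply quote Step~1 for $\FF_2\otimes_B K$. The heart of the argument --- and the part that genuinely requires the structure theory of $\Steen(n)$ developed in \cite{Margolis_book} --- is to show that the vanishing of $\MM(K,\PP_t^s)$ for the generators $\PP_t^s$ with $s<t$, $s+t\le n+1$ \emph{propagates}, through the relations among the $\PP_t^s$ inside $\Steen(n)$ together with a minimal-degree/periodicity argument (of the type that already forces $\MM(\FF_2\otimes_{\Lambda(\Q_0)}K,\Q_1)=0$ from $\MM(K,\Q_1)=0$), to freeness of each such subquotient. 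Verifying that the list $\{\PP_t^s:s<t,\ s+t\le n+1\}$ is exactly large enough for this propagation to go through is the essential difficulty; everything else in the proof is formal.
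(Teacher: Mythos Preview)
The paper does not prove Theorem~\ref{mar} at all: it is quoted verbatim from \cite[Chapter~19, Theorem~6]{Margolis_book} and used as a black box, so there is no proof in the paper to compare your proposal against.

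As for the proposal itself, it is a faithful outline of Margolis's strategy, but by your own admission it is not a proof. Steps~1 and~2 are essentially complete and correct. In Step~3, however, you correctly identify the crux---that freeness of $\FF_2\otimes_B K$ over $\Steen(n)\modmod B$ does not follow formally from the hypotheses, since the quotient involves generators $\PP_i^j$ with $j\ge i$ for which no Margolis homology is assumed to vanish---and then stop, saying only that the required ``propagation'' argument ``genuinely requires the structure theory of $\Steen(n)$ developed in \cite{Margolis_book}.'' That is exactly right, but it means the proposal is a roadmap rather than a proof: the substantive content of Margolis's theorem lies precisely in the step you defer. If you intend to actually prove the theorem, you would need to carry out that structure-theoretic argument (or an equivalent one); if you only intend to cite it, as the paper does, then no proof sketch is needed.
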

 To emulate the strategy of Lellmann and Mahowald to understand the $\tmf$-based Adams spectral sequence for $S^0$ one needs to first identify the $\Steen(2)$-free part of 
 \[ H^{*}(\tmf^{\sma r}) \cong (\Steen \modmod \Steen(2))^{\otimes r} .\]
 Potentially, this can be identified using the knowledge of $\MM(\tmf^{\sma r}, \Q_i)$ for $i = 0,1,2$ and $\MM(\tmf^{\sma r}, \Pt)$, along with  Theorem~\ref{mar}.
 
\subsection*{Motivation \rom{3} - Infinite loop space of $\tmf$}
There are $\Steen$-modules $J(k)$, called Brown--Gitler modules \cite{BrownGitler}, which assemble into a doubly graded $\Steen$-algebra, denoted here by $J(\ast)^\ast$. Moreover, there is an $\Steen$-module isomorphism $J(\ast)^\ast \cong \F_2[x_1, x_2, \dots]$ where $x_i \in J(2^i)^1$ and the left $\Steen$ action on $J(\ast)^\ast$ is \cite{Schwartz} $$\Sq(x_i) = x_i + x_{i-1}^2.$$ In fact, $J(k)^\ast$ can be thought of as inheriting this action by virtue of being a subobject of $\Steen$. Because of this, minor modifications to methods of this paper apply to the calculation of $\MM(J(k) , \Pt)$. By \cite{KuhnMcCarty} there is a spectral sequence, obtained by studying Goodwillie towers, relating the knowledge of $H_*(\tmf ; \F_2)$ to that of  $H_*(\Omega^\infty \tmf ; \F_2)$ (also see \cite{HaugsengMiller} which provides a spectral sequence relating the cohomology of $\tmf$ to the cohomology of its infinite loop-space  $H^*(\Omega^{\infty} tmf; \F_2)$). Roughly speaking, this relies on computing certain derived functors, usually labeled $\Omega_s^\infty$, in the category of unstable modules over $\Steen$. It turns out that there is an isomorphism (see \cite{Goerss_UnstableProjectives} or \cite{HunterKuhn})
$$\Omega_s^\infty \Sigma^{-t} (\Steen\modmod \Steen(2))_\ast \cong \Ext^{s,t}_{\Steen(2)}(\F_2, J(\ast)),$$ 
so that these computations require an understanding of the $J(k)$ as modules over $\Steen(2)$, the hardest part of which is understanding how $\Pt$ acts.

 \subsection*{Organization of the paper}
 In Section~\ref{Sec:Margolis}, we recall some facts about the Steenrod algebra and its dual. We introduce the spectral sequence  \eqref{LSS}, which computes the $\Pt$ Margolis homology of $\tmf$, and discuss the $d_0$ differentials in it.  

 In Section~\ref{Sec:reduced}, we compute the $E_3=E_{\infty}$ page of the spectral sequence \eqref{LSS}. 
% we introduce the notion of reduced length, denoted $\ell$. We identify the subgroup $\SS$ of $\R$ which consists of some permanent cycles and show that $\MM(\R, \Pt)$ is a module over the ring $
%\SS$. We explicitly identify the generators of $\MM(\tmf, \Pt)$ as a module over $\SS$.
 We do that by introducing building blocks $M_J$ and computing $\MM(M_J, \Pt)$. Then we establish the relationship between $\MM(\tmf, \Pt)$ and  $\MM(M_J, \Pt)$ in Theorem~\ref{thm:basistmf}. 
% We also use the knowledge of $\mathcal{B}_J$ to provide a basis for $\MM(\tmf, \Pt)$, which is also a part of Theorem~\ref{thm:basistmf}. 
 
 In Section~\ref{sec:misc}, we show how to apply the same methods to calculate $\Pt$ Margolis homology for $\tmf^{\sma r}$ and $(B\mathbb{Z}/2^{\times k})_+$.
%  It turns out that the calculations are very similar to that of $\tmf$ and 
  Theorem~\ref{thm:basistmf} essentially gives complete answer in these cases.
 
   \subsection*{Acknowledgments}
 The authors are indebted to Nicolas Ricka for many insightful conversations. We are grateful to Nick Kuhn, Haynes Miller and John Rognes for their help and comments. We would like to thank the anonymous referee for many helpful comments and suggestions for improvement. This material is based upon work supported by the National Science Foundation Grant DMS-1440140, while the second author was in residence at the MSRI during the Spring 2019 semester, and Grants DMS-1638352 and DMS-2005627.

\section{Action of \texorpdfstring{$P_2^1$}{P21} and the length spectral sequence} \label{Sec:Margolis}
%\subsection{Action of $\Pt$ on $H_*(\tmf)$}
The dual Steenrod algebra $\Steen_*=\pi_*(H\FF_2 \wedge H\FF_2)$ has the structure of a  graded commutative  algebra which Milnor \cite{Milnor_Steenrod} 
showed to be a polynomial algebra
\[ \Steen_* \cong \Ft[\xi_1, \xi_2, \xi_3, \dots]\]
where $|\xi_i| = 2^{i} - 1$.   
Milnor defined $\Sq(r_1, r_2, \ldots) \in \Steen$ as the dual of  $\xi_1^{r_1}\xi_2^{r_2}\ldots$ and showed that they form an $\FF_2$ basis of the Steenrod algebra $\Steen$, known as the Milnor basis. The $\PP_t^s$ elements are defined as  
\[\PP_t^s=\Sq(r_1, \ldots), \text{  where  } r_i=
\begin{cases}
0, &i\neq t\\
2^s, &i=t.
\end{cases}
\]
 The action of an element $a \in \Steen$  on an $\Steen$-algebra follows the product rule given by the Cartan formula, i.e. 
 \[
 a(x\cdot y) = \Sigma_i a_i'(x) \cdot a_i''(y).
 \]
 where $\Delta(a)=\Sigma_i a_i'\otimes a_i''$ is the comultiplication in the Hopf algebra $\Steen$.
 
\begin{rem}
We would like to note that standard commonly used notation for the generators of the dual Steenrod algebra at $p=2$ differs from the notation in the original paper \cite{Milnor_Steenrod}, and we are grateful to John Rognes for explaining this to us. 
In \cite[Appendix 1]{Milnor_Steenrod}, Milnor denotes the polynomial generators of the dual Steenod algebra at $p=2$ by $\zeta_i$, so that 
$
\Steen_* \cong \FF_2[\zeta_1, \zeta_2, \ldots]
$
 and defines $\Sq(r_1, r_2, \ldots)$ as dual to the element $\zeta_1^{r_1}\zeta_2^{r_2}\cdots$. It has since become standard in the literature \cite{MosherTangora, A74, Margolis_book} to use a different notation and to denote the polynomial generators which were denoted by $\zeta_i$ in \cite[Appendix 1]{Milnor_Steenrod} by $\xi_i$, in order to match the notation for the odd primary Steenrod algebra. Hence in current standard notation $\Sq(r_1, r_2, \ldots)$ is dual to $\xi_1^{r_1}\xi_2^{r_2}\cdots$. The symbol $\zeta_i$ is now usually used to denote the image of $\xi_i$ under the antipode of the Hopf algebra $\chi: \Steen_* \to \Steen_*$, induced by the `flip map' on $H\FF_2 \wedge H\FF_2$. The elements $\zeta_i=\chi(\xi_i)$ can be computed recursively using the formula 
$\displaystyle \sum_{i+j=k}\xi_i^{2^j} \chi(\xi_j)=0$, together 
with the assumption that $\xi_0 = 1$ and $\xi_{i} = 0$ when $i <0$. 
%  This is not just a difference of notations, since the comultiplication on $\xi_i $ and $\zeta_i$ is very different. Namely,
%\begin{equation}\label{eq:Delta-definition}
%\Delta(\xi_k)=\sum_{i+j=k}\xi_i^{2^j}\otimes \xi_j
%\end{equation}
%while the comultiplication on $\zeta_i$ is given by 
%\begin{equation}\label{eq:Delta-definition2}
%\Delta(\zeta_k)=\sum_{i+j=k} \zeta_j \otimes \zeta_i^{2^j}
%\end{equation}
%which can be deduced from \eqref{eq:chi-definition} and \eqref{eq:Delta-definition}.
  %The anti-automorphic image  of $\chi(\xi_i)$ is usually denoted by $\zeta_i$. 
\end{rem}
%Let us now consider the spectrum $\tmf$.
  The homology of $\tmf$ is the subalgebra of $\Steen_*$ (\cite{HopkinsMahowald}, \cite[Theorem 5.13]{Mathew_tmf})
\begin{equation}\label{eq:homology-of-tmf}
\mathfrak{T}:=H_*(\tmf; \FF_2)\cong (\Steen \modmod \Steen(2))_* =\Ft[\zeta_1^8, \zeta_2^4, \zeta_3^2, \zeta_4, \zeta_5, \dots].
\end{equation}
Thus the action of $\Steen$ on $\mathfrak{T}$ is simply the restriction of the action of $\Steen$ on $\Steen_*$.

The right action of $\Steen$ on $\Steen_*$ is determined by the action of the total squaring operation $\Sq = 1 + \sum_{i >0}\Sq^i$  \cite[Lemma 3.6]{Pearson}
\begin{equation} \label{rightaction}
(\zeta_i)\Sq = \zeta_i + \zeta_{i-1}^2 + \zeta_{i-2}^4 +  \dots + \zeta_{1}^{2^{i-1}} + 1
\end{equation}
which is a ring homomorphism. 
\begin{rem}[Action of the total squaring operation]There are multiple ways to define the action of $\Steen$ on $\Steen_*$. While we will be using the action defined by \eqref{rightaction}, we would like to collect other commonly used actions here. By \cite{Mah81}, the right and left actions of $\Sq$ on $\xi_i$ are given by the formulas
\begin{align*} \Sq(\xi_i) &= \xi_i + \xi_{i-1}^2 \\
(\xi_i)\Sq &= \xi_i + \xi_{i-1},
\end{align*}
while the left action on $\zeta_i$ is 
\[ \Sq(\zeta_i) = \zeta_i + \zeta_{i-1} + \dots + \zeta_1 + 1.\]
From these formulas we can derive 
\begin{align*}
    \Q_{i-1}(\xi_n)&=\xi_{n-i}^{2^i}\\
    (\zeta_n)\Q_{i-1} &=\zeta_{n-i}^{2^i};
\end{align*}
the second equation can also be found in \cite{C}.
\end{rem}

\begin{impnotn} \label{left-right}
Since we only work with the right action of $\Sq$ in this paper, we will write $a(x)$ to denote the \emph{right} action of $a\in \Steen$ on $x\in H_*(\tmf)$ for the rest of the paper. Thus, from now on
\[
\boxed{
\textcolor{red}{a(x):=(x)a.}
}\]
\end{impnotn}

We now focus on the action of $\Pt = \Sq(0,2)=\Sq^2 \Sq^4 + \Sq^4 \Sq^2$ on $\mathfrak{T}$. From \eqref{rightaction}, one can easily see that $\Sq^{2i}$ acts trivially on $\zeta_n$, when $i>0$ and $n\neq 1$. It follows immediately that  $$\Pt(\zeta_i) = 0.$$
 Beware! This \emph{does not} mean that $\Pt( \zeta_i \zeta_j) = 0$, as the Leibniz rule does not hold. Since
$ 
\Delta(\Pt) = \Pt \otimes1 + \Q_1\otimes \Q_1 + 1\otimes \Pt,$
we obtain the product formula  
\begin{equation}\label{eq:Leibniz-on-p21}
 \Pt(xy) = \Pt(x)y + \Q_1(x)\Q_1(y) + x \Pt(y).
\end{equation}
Using  $\Q_1(\zeta_i)=\zeta_{i-2}^4$, we  get  
\begin{equation} \label{prodex1}
\Pt(\zeta_i \zeta_j) = \zeta_{i-2}^4 \zeta_{j-2}^4
\qquad \qquad \qquad
 \Pt(\zeta_i^2) = \zeta_{i-2}^8. 
 \end{equation}
%\begin{equation} \label{prodex1}
%\Pt(\zeta_i \zeta_j) = \zeta_{i-2}^4 \zeta_{j-2}^4
%\end{equation}  
%\begin{equation} \label{prodex2}
% \Pt(\zeta_i^2) = \zeta_{i-2}^8. 
% \end{equation}
Formulas become more complicated for triple products, e.g. 
\[ \Pt(\zeta_i \zeta_j \zeta_k) = \zeta_{i-2}^4 \zeta_{j-2}^4 \zeta_k + \zeta_{i-2}^4 \zeta_j \zeta_{k-2}^4 + \zeta_i \zeta_{j-2}^4 \zeta_{k-2}^4,\]
and in general we have the following result.
\begin{lem} \label{Lem:P21_action}
The action of $\Pt$ on $\mathfrak{T}$ is given by the formula 
\begin{eqnarray*}
\Pt(\zeta_{i_1} \dots \zeta_{i_n}) &=& \sum_{1 \leq j< k \leq n} \frac{\zeta_{i_1} \dots \zeta_{i_n}}{\zeta_{i_j}\zeta_{i_k}}  \Q_1(\zeta_{i_j})\Q_1(\zeta_{i_k}) \\
&=&\sum_{1 \leq j< k \leq n}  \zeta_{i_1}\dots \zeta_{i_{j-1}} \zeta_{i_j -2}^4 \zeta_{i_{j+1}}\dots \zeta_{i_{k-1}} \zeta_{i_k - 2}^4  \zeta_{i_{k+1}} \dots \zeta_{i_n}, 
\end{eqnarray*}
where indices are allowed to repeat. 
\end{lem}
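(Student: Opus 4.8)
The plan is to prove the formula by induction on the number $n$ of factors, using only the product formula \eqref{eq:diagonal-map}, the vanishing $\Pt(\zeta_i)=0$, and the identity $\Q_1(\zeta_i)=\zeta_{i-2}^4$ recorded above. For $n=1$ both sides vanish (the sum is empty and $\Pt(\zeta_{i_1})=0$), and for $n=2$ the claim is exactly \eqref{prodex1}, with the repeated-index case $i_1=i_2$ recovering \eqref{prodex2}; this serves as the base of the induction.

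For the inductive step I would write $\zeta_{i_1}\cdots\zeta_{i_n}=x\,\zeta_{i_n}$ with $x=\zeta_{i_1}\cdots\zeta_{i_{n-1}}$ and apply \eqref{eq:diagonal-map}:
\[
\Pt(x\,\zeta_{i_n})=\Pt(x)\,\zeta_{i_n}+\Q_1(x)\,\Q_1(\zeta_{i_n})+x\,\Pt(\zeta_{i_n})
=\Pt(x)\,\zeta_{i_n}+\Q_1(x)\,\zeta_{i_n-2}^{4},
\]
since the last summand is zero. The inductive hypothesis rewrites $\Pt(x)\,\zeta_{i_n}$ as the sum of the asserted terms indexed by pairs $1\le j<k\le n-1$. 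Since $\Q_1$ is primitive in $\Steen$, hence a derivation, the iterated Leibniz rule gives $\Q_1(x)=\sum_{j=1}^{n-1}\zeta_{i_1}\cdots\zeta_{i_{j-1}}\zeta_{i_j-2}^{4}\zeta_{i_{j+1}}\cdots\zeta_{i_{n-1}}$, so $\Q_1(x)\,\zeta_{i_n-2}^{4}$ contributes precisely the terms with $k=n$. Adding the two groups produces the full sum over $1\le j<k\le n$, completing the induction.

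Conceptually this is simply the statement that $\Pt$ acts on an $n$-fold product through its $(n-1)$-fold iterated coproduct, and that $\Delta^{(n-1)}(\Pt)$ — computed from $\Delta(\Pt)=\Pt|1+\Q_1|\Q_1+1|\Pt$ together with the primitivity of $\Q_1$ — is a sum of terms with $\Pt$ in a single slot (which annihilate $\zeta_{i_1}\otimes\cdots\otimes\zeta_{i_n}$ because $\Pt(\zeta_i)=0$) together with terms carrying $\Q_1$ in exactly two slots. One may prefer to organize the proof this way; it yields the same formula.

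The only point requiring genuine care is the bookkeeping when some $i_j$ repeat. I would remark that the "sum over positions" form of the Leibniz rule for $\Q_1$ is still valid mod $2$ — a letter $\zeta_a$ of even multiplicity merely contributes an even number of equal terms, which cancel — and that the stated formula is internally consistent with this: if $i_j=i_k=a$ the corresponding summand equals $\zeta_{a-2}^{8}\cdot(\text{rest})$, matching $\Pt(\zeta_a^2)=\zeta_{a-2}^{8}$ from \eqref{prodex2}. Beyond this observation the argument is a routine unwinding of the product rule, so I do not expect a real obstacle; the mild annoyance is purely notational.
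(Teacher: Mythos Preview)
Your proposal is correct and follows exactly the approach the paper indicates: an induction on $n$ using $\Pt(\zeta_i)=0$, $\Q_1(\zeta_i)=\zeta_{i-2}^4$, and the product formula \eqref{eq:diagonal-map}. The paper's proof is a one-line pointer to this argument, so your write-up simply supplies the details (and the iterated-coproduct viewpoint you mention is a harmless repackaging of the same induction).
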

\begin{proof}
Follows from an inductive argument on $n$, using \eqref{eq:Leibniz-on-p21} and the facts that $\Pt(\zeta_i)=0$ and $\Q_1(\zeta_i)=\zeta_{i-2}^4$. 
\end{proof}

The technique developed in this paper begins with the following observation. Consider the subalgebra  
\[ \mathcal{E} := \Ft[\zeta_1^8, \zeta_2^4, \zeta_3^2, \zeta_4^2, \zeta_5^2, \dots ]  \subset \mathfrak{T}=\Ft[\zeta_1^8, \zeta_2^4, \zeta_3^2, \zeta_4, \zeta_5, \dots]\]
 which we will call the \emph{even} subalgebra of $\mathfrak{T}$, as every element in $\mathcal{E}$ has even grading.  Since $|\Q_1| =3$ and every element in $\mathcal{E}$  has even grading, $\Q_1$ must act trivially on $\mathcal{E}$.  Thus, $\Pt$ restricted to $\mathcal{E}$ follows the Leibniz rule, hence $\MM(\mathcal{E}^{\otimes r}, \Pt)$ is an algebra. Using \eqref{prodex1} and the K{\"u}nneth isomorphism for a derivation, we can easily deduce the following result.
 \begin{lem} The $\Pt$ Margolis homology of $\mathcal{E}$ is given by
\[ \MM(\mathcal{E}, \Pt) \iso \Lambda(\zeta_2^4, \zeta_3^4, \zeta_4^4,\dots).\]
Moreover 
\[ \MM(\mathcal{E}^{\otimes r}, \Pt)  \iso  \MM(\mathcal{E}, \Pt)^{\otimes r}  \iso (\Lambda(\zeta_2^4, \zeta_3^4, \zeta_4^4,\dots))^{\otimes r}.\]
\end{lem}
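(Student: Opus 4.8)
The plan is to reduce to a computation with a derivation, split $E$ as a tensor product of elementary polynomial differential graded algebras via the K{\"u}nneth theorem, and resolve the remaining factors by an iterated filtration argument.

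Since every element of $E$ has even internal degree while $|\Q_1|=3$, the operation $\Q_1$ acts by zero on $E$, so by the product formula \eqref{eq:diagonal-map} the restriction of $\Pt$ to $E$ is a derivation. Combining the special case $\Pt(z^2)=\Q_1(z)^2$ with $\Q_1(\zeta_n)=\zeta_{n-2}^4$ for $n\geq 3$ and $\Q_1(\zeta_1)=\Q_1(\zeta_2)=0$, I would compute this derivation on the polynomial generators of $E$:
\[
\Pt(\zeta_1^8)=\Pt(\zeta_2^4)=0, \qquad \Pt(\zeta_n^2)=\zeta_{n-2}^8 \quad (n\geq 3),
\]
where $\zeta_{n-2}^8$ equals the generator $\zeta_1^8$ when $n=3$, equals $(\zeta_2^4)^2$ when $n=4$, and equals $(\zeta_{n-2}^2)^4$ when $n\geq 5$. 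In particular $\Pt$ preserves the partition of the generators of $E$ into the two ``chains'' $\{\zeta_1^8,\zeta_3^2,\zeta_5^2,\dots\}$ and $\{\zeta_2^4,\zeta_4^2,\zeta_6^2,\dots\}$, so $E$ splits as a tensor product of sub-DGAs $E\cong E_{\mathrm{odd}}\otimes E_{\mathrm{ev}}$ on these two generating sets, and K{\"u}nneth over $\FF_2$ gives $\MM(E,\Pt)\cong \MM(E_{\mathrm{odd}},\Pt)\otimes \MM(E_{\mathrm{ev}},\Pt)$.

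Each of $E_{\mathrm{odd}}$ and $E_{\mathrm{ev}}$ is the filtered union of its finitely generated truncations, and $\MM(-,\Pt)$ commutes with filtered colimits, so it suffices to compute the homology of each truncation. For this I would filter a truncation by the power of its top generator and run the associated spectral sequence: its $E_0$-page is the shorter truncation tensored with a polynomial algebra on the top generator, so inductively $E_1\cong (\text{homology of the shorter truncation})\otimes \FF_2[\text{top generator}]$, and $d_1$ carries the top generator $\zeta_n^2$ to the class of $\Pt(\zeta_n^2)=\zeta_{n-2}^8$ in that shorter homology. This class is the first power of the polynomial generator $\zeta_1^8$ at the very bottom of the odd chain, and the square of the top polynomial class at every other stage; feeding the two elementary computations $\MM(\FF_2[x,y],dy=x)=\FF_2[y^2]$ and $\MM(\FF_2[x,y],dy=x^2)=\Lambda(x)\otimes \FF_2[y^2]$ into the induction then identifies each truncated homology, the spectral sequences collapsing after $d_1$ for degree reasons. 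Passing to the colimit gives $\MM(E_{\mathrm{odd}},\Pt)\cong \Lambda(\zeta_3^4,\zeta_5^4,\dots)$, with $\zeta_1^8=\Pt(\zeta_3^2)$ a boundary, and $\MM(E_{\mathrm{ev}},\Pt)\cong \Lambda(\zeta_2^4,\zeta_4^4,\dots)$, with $\zeta_2^4$ surviving as an exterior generator since $\Pt$ hits only its square; their tensor product is $\Lambda(\zeta_2^4,\zeta_3^4,\zeta_4^4,\dots)$, which is the first assertion.

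For the smash powers, the same parity argument gives $\Q_1=0$ on $E^{\otimes r}$, so iterating the coproduct of $\Pt$ shows that $\Pt$ acts on $E^{\otimes r}$ as the tensor-product derivation of $r$ copies of $(E,\Pt)$, and K{\"u}nneth over $\FF_2$ immediately yields $\MM(E^{\otimes r},\Pt)\cong \MM(E,\Pt)^{\otimes r}$. The main obstacle is the chain computation: notwithstanding the phrase ``K{\"u}nneth isomorphism for a derivation,'' $E$ does \emph{not} split into two-variable DGAs on which $\Pt$ restricts, because $\Pt(\zeta_n^2)$ is the fourth power $(\zeta_{n-2}^2)^4$ rather than a generator, so the polynomial generators are genuinely linked into the two infinite chains above; the real content is resolving those chains by the iterated spectral sequence, whose only subtlety is verifying the collapse of each page on degree grounds.
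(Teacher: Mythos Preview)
Your argument is correct. The paper's ``proof'' is a single sentence invoking \eqref{prodex2} and ``the K{\"u}nneth isomorphism for a derivation,'' and your proposal is a careful expansion of what that sentence must mean: reduce to a derivation via the vanishing of $\Q_1$ on $E$, then compute. You are right that the naive reading of K{\"u}nneth is misleading here: $E$ does not decompose as a tensor product of two-variable DGAs under $\Pt$, because $\Pt(\zeta_n^2)=\zeta_{n-2}^8$ is a higher power of a generator rather than a generator, so the variables are linked into the two infinite chains you describe. Your inductive filtration argument on each chain is one standard way to make the computation honest; the collapse after $d_1$ is not really a subtle degree argument but simply the observation that on each truncation the derivation decreases the top-variable degree by exactly $0$ or $1$, so $d_r=0$ for $r\geq 2$ automatically. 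The colimit step then turns the remaining polynomial generator $\zeta_n^4$ exterior at the next stage, since $(\zeta_n^4)^2=\zeta_n^8=\Pt(\zeta_{n+2}^2)$ becomes a boundary. So your approach and the paper's are the same in spirit; you have supplied the details the paper suppresses, and your critique of the phrase ``K{\"u}nneth isomorphism for a derivation'' is fair.
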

Now consider
%$
%0 \to \mathcal{E} \to \mathfrak{T} \to \mathcal{K} \to 0
%$,
the quotient $\mathcal{K} := \frT \modmod \mathcal{E} \cong \FF_2 \otimes_{\mathcal{E}} \mathfrak{T}$. We have an isomorphism  $\mathcal{K}\cong \Lambda( \zeta_4, \zeta_5, \dots)$, and the induced action of $\Q_1$ and $\Pt$ on $\mathcal{K}$ is trivial.
For a set $A$, we let $\Ft\langle A \rangle$ denote the $\Ft$-vector space which has the generating set $A$.
 The algebra $\mathcal{K}$ admits a natural increasing  filtration 
\[ G^p(\mathcal{K}) := \mathbb{F}_2\langle \zeta_{i_1} \dots \zeta_{i_k} | k \leq p \rangle.\]
induced by the length of the monomials. We call it the \emph{length filtration}. 

This length filtration on $\mathcal{K}$ induces an increasing filtration $\{ G^p(\frT)\}_{p\geq 0 } $ on $\frT$,
where  $G^p(\frT)$ is the pullback of $G^p(\mathcal{K})$ (in vector spaces) along the quotient map $\frT \twoheadrightarrow \mathcal{K}$
\[ 
\xymatrix{
 G^p(\frT) \ar[r] \ar[d] & \frT \ar@{->>}[d] \\
G^p(\mathcal{K}) \ar[r] & \mathcal{K}.
}
\]
\begin{defn}
Let $I$ be a finite tuple of natural numbers, and for $I= \set{i_1, \dots, i_n}$ let $\zeta^I$  denote the monomial $\zeta_1^{i_1}\dots \zeta_n^{i_n}.$
Then the \emph{length} $L$ of $\zeta^I$ is defined by 
\[
   L(\zeta^I)= \sum_{j=1}^{|I|} (i_j \mod 2).
\]
\end{defn}
In other words, $L(\zeta^I)$ counts the number of odd exponents  in $\zeta^I.$ Then $G^p(\frT)$ is the span of monomials $\zeta^{I}$ of length less than or equal to $p$ 
\[ G^p(\frT) \cong \mathbb{F}_2\langle \zeta^I | L(\zeta^I) \leq p \rangle.\]
The  length function $L$ measures ``how far"  a given monomial in $\mathfrak{T}$ is from the even subalgebra $\mathcal{E}$.
%\begin{rem}
%The  length function $L$ measures ``how far"  a given monomial in $\mathfrak{T}$ is from the even subalgebra $\mathcal{E}$.
%\end{rem}
% The notion of length leads to an increasing filtration of $\mathfrak{T}$, $\lbrace G^p: p \in \mathbb{N} \rbrace$, called the \emph{length filtration}, where  
%\[
  % G^p(T)=\langle  \zeta^I  L(\zeta^I)\leq p \rangle.
%\]
%This filtration was recently used in \cite{C} and is essentially the filtration introduced in \cite{CarlisleKuhn}. 
Since there is an  $\Ft$-vector space isomorphism 
\[ \mathfrak{T} \cong \mathcal{E} \otimes \frT \modmod \mathcal{E} = \mathcal{E} \otimes \mathcal{K} \]
 any monomial  $m \in \mathfrak{T}$ can be uniquely written as $e \cdot k$ where $e \in \mathcal{E}$ and $k \in \mathcal{K}$. 
 \begin{ex}
If $m = \zeta_3^4 \zeta_5^5 \zeta_8^3$, then there is an unique expression $m = e \cdot k $ ,where  $e = \zeta_3^4 \zeta_5^4 \zeta_8^2 \in \mathcal{E}$ and $k = \zeta_5 \zeta_8 \in \mathcal{K}$. 
\end{ex}
The following lemma shows that the action of $\Q_1$ and $\Pt$ preserves the length filtration. 
\begin{lem}\label{Lem:action-reduces-length} Let $m \in \mathfrak{T} $ be any monomial. 
\begin{enumerate} 
\item If $m\in \mathcal{E}$, then $\Q_1(m)=0$ and $\Pt(m)\in \mathcal{E}$. 
\item If $m\notin \mathcal{E}$, then $\Q_1(m) \in G^{L(m) -1}(\mathfrak{T})/G^{L(m) -2}(\mathfrak{T})$ and 
$$\Pt(m)= m_{L}+m_{L-2},$$ where $m_{L} \in G^{L(m)}(\mathfrak{T})/G^{L(m) -1}(\mathfrak{T})$ and $m_{L-2} \in G^{L(m) -2}(\mathfrak{T})/G^{L(m)-3}(\mathfrak{T})$.
%If $\Pt(m) \neq 0$, then 
%\[L(\Pt (m))=
%\begin{cases}
%L(m), &\text{ if }m\in E\\
%L(m)-2, &\text{ otherwise. }
%\end{cases}
%\]
\end{enumerate}
\end{lem}
\begin{proof} When $m \in \mathcal{E}$,  $\Q_1(m) = 0$ by the Leibniz rule. Using Lemma~\ref{Lem:P21_action} we have 
$\Pt(m) \in \mathcal {E}$ and
$ L(\Pt(m)) = L(m) =0 $.
 %completing the proof of  $(i)$. 

 Now assume $m\notin \mathcal{E}$, which means $m= e\cdot k$ for some $e \in \mathcal{E}$ and some $1 \neq k\in \mathcal{K}$. Note that $k$ is of the form $\zeta_{i_1} \dots \zeta_{i_n}$,  where indices do not repeat.

The action of $\Q_1$ is given by the formula 
\[ \Q_1(\zeta_{i_1} \dots \zeta_{i_n}) = \sum_{k=1}^n \zeta_{i_1} \dots \zeta_{i_{k-1}} \zeta_{i_k -2}^4 \zeta_{i_{k+1}} \dots \zeta_{i_n}\]
where we allow repetition of indices.  Since $\Q_1$ acts trivially on $\mathcal{E}$, it follows that 
\[ \Q_1(e \cdot k) = e \cdot \Q_1(k). \]
 From the formula above we see that $\Q_1(k) \neq 0$ and $L(\Q_1(k)) = L(k) -1$. Hence, 
\[ L(\Q_1(m)) = L(e \cdot \Q_1(k)) = L(\Q_1(k))= L(k) -1 = L(e \cdot k) -1= L(m) -1. \]

Next, note that 
\[
\Pt(m)= \Pt(e) \cdot k + \Q_1(e) \cdot \Q_1(k) + e \cdot \Pt(k) = \Pt(e) \cdot k+ e \cdot \Pt(k)
 \]
From Lemma~\ref{Lem:P21_action}, we see that $L(\Pt(k)) = L(\Pt(k)) - 2$ assuming $\Pt(k) \neq 0$. Now set 
$m_L = \Pt(e) \cdot k$ and $m_{L-2} = e \cdot \Pt(k)$
%, and hence 
%\[ L(\Pt(m)) = L(e \cdot \Pt(k)) = L(\Pt(k))= L(k)-2= L(e \cdot k) -2 =L(m) -2.\]
\end{proof}
\begin{lem}\label{lem:P21Q1=Q1P21}
The Hopf algebra $ \Lambda(\Q_1, \Pt)$ is commutative and cocommutative.
 \end{lem}
 \begin{proof} Commutativity follows from the fact that $\Pt$ and $\Q_1$ commute  \cite[Lemma~1.3(2)]{AdamsMargolis} (in the notation of \cite{AdamsMargolis}, $\Pt = P_2(2)$ and $\Q_1 = P_2(1)$).  Cocommutativity follows from the fact that  
 the diagram
 \[ 
 \xymatrix{
  \Lambda(\Q_1, \Pt) \ar[r]^-{\Delta}\ar[rd]_-{\Delta} &  \Lambda(\Q_1, \Pt) \otimes  \Lambda(\Q_1, \Pt) \ar[d]^{\text{flip}} \\
 &  \Lambda(\Q_1, \Pt) \otimes  \Lambda(\Q_1, \Pt)
 }
 \]
 commutes, because of \eqref{coprodQ} and \eqref{coprodP}.
 \end{proof}

If $M$ is a $\Lambda(\Q_1, \Pt)$-module then let $\mathcal{C}_M^{\bullet}$ denote the periodic chain complex 
\[ \dots \overset{\Pt}{\longrightarrow} M  \overset{\Pt}{\longrightarrow} M \overset{\Pt}{\longrightarrow} \dots . \]
Its homology groups are isomorphic in each degree, i.e. 
\[ H_i(\mathcal{C}_M^{\bullet}) \iso H_j(\mathcal{C}_M^{\bullet})\] 
for all $i,j \in \mathbb{Z}$.  We use $\MM(M, \Pt)$ to denote this common homology group. When $M = \frT$, the filtration $G^{\bullet}(\frT)$ induces a filtration on $C^{\bullet}_{\frT}$. 
By Lemma~\ref{Lem:action-reduces-length}, $\Pt$ respects the length filtration. This means we have a short exact sequence of chain complexes 
\[
\xymatrix{
 0 \ar[r] &  \bigoplus_{p\in \mathbb{Z}} G^{p-1}( C^{\bullet}_{\frT}) \ar[r] & \bigoplus_{p\in \mathbb{Z}} G^{p }( C^{\bullet}_{\frT}) \ar[r] & \bigoplus_{p\in \mathbb{Z}} \frac{ G^{p }( C^{\bullet}_{\frT})}{G^{p-1}( C^{\bullet}_{\frT})} \ar[r]& 0.
 }  \]
Upon taking the homology, this short exact sequence of chain complexes produces an exact couple,  resulting in a  spectral sequence
\[ 
E_1^{p,q} := H^{q}\left( \frac{ G^{p }( C^{\bullet}_{\frT})}{G^{p-1}( C^{\bullet}_{\frT})}\right) \Rightarrow H^q(C^{\bullet}_{\frT}).    
\]
We rewrite this spectral sequence as 
\begin{equation} \label{LSS}
 E_1^{p} := \MM\left(\frac{G^{p}(\frT)}{G^{p-1}(\frT)}, \Pt\right)  \Rightarrow  \MM(\tmf, \Pt).
\end{equation}
and we call it the \emph{length spectral sequence}.
%\begin{rem} Internal grading $r$ in \eqref{LSS} do not play any role in calculations to follow. In other words, all our result remains true if we forget the internal grading. This is the reason why we would often neglect the internal grading. 
%\end{rem}

The $E_1$ page of \eqref{LSS} is easy to calculate.  Note that the associated graded 

\[ \bigoplus_{p\geq 0}\frac{G^{p}(\frT)}{G^{p-1}(\frT)} \cong  \mathcal{E} \otimes \mathcal{K} \]
as an $\FF_2$-algebra. 
The action of $\Lambda(\Q_1, \Pt)$ on $\mathcal{E}\otimes \mathcal{K}$ is defined using the Cartan formula as in the definition below.
\begin{defn}[\cite{Margolis_book}, p.186]\label{def:tensor-Hopf}
Let $\Gamma$ be any Hopf algebra. For two  $\Gamma$-modules $M$ and $N$,  the underlying $\Ft$ vector space of $M \otimes N$ is simply $M \otimes_{\mathbb{F}_2}N$, and $\Gamma$ acts via the diagonal map, i.e. 
\[ a(m \otimes n) = \sum_i a_i(m) \otimes a_i'(n) .\]
 where $a \in \Gamma$ and $\Delta(a) = \sum_{i} a_i \otimes a_{i}'$, where $\Delta$ is the coproduct of the Hopf algebra.
\end{defn} 

 Now we describe the action of  $\Pt$ on   a monomial $m \in \bigoplus_{p\geq -1} \frac{G^{p}(\frT)}{G^{p-1}(\frT)}$. 
  Write $m = e\otimes k$ for some $e \in \mathcal{E}$ and $k \in \mathcal{K}$. By
  Definition \ref{def:tensor-Hopf} %Lemma~\ref{Lem:action-reduces-length} that 
\[ \Pt(m) = \Pt(e \otimes k) = \Pt(e) \otimes k. \]
 Since the length filtration $G^{\bullet}( \frT )$ is multiplicative, i.e.
 \[  G^{p}( \frT ) \cdot G^{p'}( \frT )  \subset G^{p+p'}( \frT ), \]
 and $\Pt$ restricted to $\mathcal{E}$ follows the Leibniz rule, the $E_1$ page of \eqref{LSS} is an $\FF_2$-algebra and isomorphic to 
 \[ E_1^{\ast} \cong \MM(\mathcal{E} \otimes \mathcal{K}, \Pt) \cong \MM(\mathcal{E}, \Pt) \otimes \mathcal{K} \iso \Lambda(\zeta_2^4, \zeta_3^4, \dots )\otimes \Lambda(\zeta_4, \zeta_5, \dots).\]
  In order to avoid confusion regarding the multiplicative structure of $E_1^{\ast}$, it is convenient to rename the generators. 
  \begin{notn} \label{Notn:xt} We set  $x_{i}:=\zeta_{i+3}$ and  $t_i:=\zeta_{i+1}^4$.
  %so that we have
%\begin{equation}\label{eq:Q1-action-on-x-and-t}
% \Q_1(x_i) = t_i.
% \end{equation}
 Further, for  finite subsets $I= \set{i_1, \dots, i_n}  \subset \mathbb{N}$ and $J= \set{j_1, \dots, j_m}  \subset \mathbb{N}$, we let $t_I$ and $x_{I}$ denote the monomials $t_{i_1}\dots t_{i_n}$ and $x_{j_1}\dots x_{j_n}$ respectively.  We use  $t_Ix_J$ to denote the tensor product  $t_I \otimes x_J$.
\end{notn}

 Lemma~\ref{Lem:action-reduces-length}  and Lemma \ref{lem:P21Q1=Q1P21} imply that we have a commutative diagram of 
 chain complexes 
 \[
\xymatrix{
0 \ar[r] \ar@{=}[d] &  \bigoplus_p G^{p}( C^{\bullet}_{\frT}) \ar[r] \ar[d]_{\Q_1(\ )} & \bigoplus_p G^{p +1 }( C^{\bullet}_{\frT}) \ar[r] \ar[d]_{\Q_1(\ )}& \bigoplus_p \frac{ G^{p +1 }( C^{\bullet}_{\frT})}{G^{p}( C^{\bullet}_{\frT})} \ar[r] \ar[d]^{\Q_1(\ )}& 0 \ar@{=}[d].\\
 0 \ar[r] &  \bigoplus_p G^{p-1}( C^{\bullet}_{\frT}) \ar[r] & \bigoplus_p G^{p }( C^{\bullet}_{\frT}) \ar[r] & \bigoplus_p \frac{ G^{p }( C^{\bullet}_{\frT})}{G^{p-1}( C^{\bullet}_{\frT})} \ar[r] & 0.
 }  \]
 Consequently there is an action of $\Q_1$ on each page of \eqref{LSS}, which shifts the length filtration by $-1$. In particular, we note $\Q_1(x_i) = t_i $ and in general 
 \begin{equation} \label{Q1}
   \Q_1(t_Ix_J) =  \sum_{ j \in J} t_{j} t_I x_{J -\set{j}}.
   \end{equation}

Let $m \in \frT$ be any monomial, $m_L$ and $m_{L-2}$ be as in  Lemma~\ref{Lem:action-reduces-length}, and let $[m]$ denote the equivalence class in the $E_1$ page of \eqref{LSS} represented by $m$.  Lemma~\ref{Lem:action-reduces-length} %(along with general theory of spectral sequences)
 implies that the $d_1$ differential of \eqref{LSS} is trivial, 
 \[ d_2([m]) = [m_{L-2}]\]
 for the class of the monomial $m \in \frT$ in the $E_1$ page, and the spectral sequence \eqref{LSS} collapses at the $E_3$ page. If we write $m \in \frT$ as $m = e \cdot k$, where $e \in \mathcal{E}$ and $k \in \mathcal{K}$, then 
\[
  d_2([m]) = [e \cdot \Pt(k)] = [e] \cdot [\Pt(k)]. 
  \]
This means that the $d_2$ differential of \eqref{LSS} is $\MM(\mathcal{E}, \Pt)$-linear. It follows from the formula of Lemma~\ref{Lem:P21_action} that  
 \begin{equation} \label{d2} d_2(t_I x_J) = \sum_{K \in J[2]} t_K t_I x_{J -K},
 \end{equation}
where $J[2]$ is the set of subsets of $J$ which contain two elements.

 The formula for the $d_2$ differentials is intimately related to the action of $\Q_1$ on the $E_2$ page of \eqref{LSS}.  The $\Lambda(\Q_1)$-module structure on $E_2^{\bullet}$ (see  \eqref{Q1}) can extend it to the $\Lambda(\Q_1, \Pt)$-module structure using the algebra structure of $E_2^{\bullet}$ and the product formula \eqref{eq:Leibniz-on-p21}), along with the initial condition 
 \begin{equation} \label{initial}
 \Pt(x_i ) = \Pt (t_i) = 0.
 \end{equation}
% The initial condition \eqref{initial} implies that 
% \[ \Pt(x_i x_j ) = \Pt(x_i) x_j + \Q_1(x_i)\Q_1(x_j) + x_i \Pt(x_j) = t_i t_j\]
%when $i \neq j$.
The action of $\Pt$ that results from this procedure is 
\begin{equation} \label{P21actionE2}
 \Pt(t_I x_J) = \sum_{K \in J[2]} t_K t_I x_{J -K} 
 \end{equation}
on the monomial basis, which can be extended to all of $E_2^{\bullet}$ using $\FF_2$-linearity.
% Since the set of monomials form an $\FF_2$-basis of $E_2^{\bullet}$, \eqref{P21actionE2}  determines  the action of $\Pt$ on $E_2^{\bullet}$.  
 Notice that the action we obtain through this process coincides with the formula for the $d_2$ differentials \eqref{d2}.

\section{The reduced length} \label{Sec:reduced}
\iffalse
In this section our goal is to calculate the $E_{\infty}$ page of \eqref{LSS}
\[
E_{\infty}^{\bullet}=E_3^{\bullet}\iso \MM(E_2^{\bullet}, \Pt)\iso
\MM(\Lambda(\zeta_2^4, \zeta_3^4, \dots )\otimes \Lambda(\zeta_4, \zeta_5, \dots), \Pt).
\]
 For simplicity, we introduce new notation.
\begin{notn} \label{Notn:xt} We set  $x_{i}:=\zeta_{i+3}$ and  $t_i:=\zeta_{i+1}^4$ so that we have
\begin{equation}\label{eq:Q1-action-on-x-and-t}
 \Q_1(x_i) = t_i.
 \end{equation}
 Further, for  finite subsets $I= \set{i_1, \dots, i_n}  \subset \mathbb{N}$ and $J= \set{j_1, \dots, j_m}  \subset \mathbb{N}$, we let $t_I$ and $x_{I}$ denote the monomials $t_{i_1}\dots t_{i_n}$ and $x_{j_1}\dots x_{j_n}$ respectively.  We use  $t_Ix_J$ to denote the tensor product  $t_I \otimes x_J$.
\end{notn}
We denote the $E_2$-page of \eqref{LSS} as 
$
\R:=\Lambda(t_i: i \geq 1) \otimes \Lambda(x_i: i \geq 1)
$
and focus on computing $\MM(\R, \Pt)$.
%\begin{notn} \label{Notn:xt}
%We denote \[x_{i}:=\zeta_{i+3} \qquad \qquad\qquad t_i:=\zeta_{i+1}^4.
%\]
  %For  finite subsets $I= \set{i_1, \dots, i_n}  \subset \mathbb{N}$ and $J= \set{j_1, \dots, j_m}  \subset \mathbb{N}$, let $t_I$ and $x_{I}$ denote the monomials $t_{i_1}\dots t_{i_n}$ and $x_{j_1}\dots x_{j_n}$ respectively. Moreover, let $t_Ix_J$ denote the element  $t_I \otimes x_J$.
%\end{notn}
%We our new notation the  $E_2$-page of \eqref{LSS} can be written as 
%$ E_2^{\bullet} \iso  \Lambda(t_{i}: i \geq 1) \otimes \Lambda(x_{i}: i \geq 1) $
%and for simplicity we will denote it by $\R$
%\[\R:=\Lambda(t_i: i \geq 1) \otimes \Lambda(x_i: i \geq 1).\]
%Our goal now is to calculate $\MM(\R, \Pt)= H(\R, d_2)$.
%We have  
%\begin{equation}\label{eq:Q1-action-on-x-and-t}
%\Q_1(x_i)=t_i \qquad \qquad \Q_1(t_i)=0
%\end{equation}
We collect the formulas for actions in the new notation:
\begin{gather*}
\Q_1(x_i)=t_i \qquad \qquad \Q_1(t_i)=0
\\
\Pt(x_i)=\Pt(t_i)=0\\
 \Q_1(t_Ix_J) =  \sum_{ j \in J} t_{j} t_I x_{J -\set{j}}.
 \end{gather*}
From Lemma~\ref{Lem:P21_action},  it follows that
\begin{equation}  \label{d2}
  d_2(t_I  x_J)  = \Pt(t_Ix_J) =  \sum_{K \in J[2]} t_K t_I x_{J -K}
\end{equation}
where $J[2]$ is the collection of order $2$ subsets of $J$.
\fi

For convenience, we denote the $E_2$-page of \eqref{LSS} by
\[\mathcal{R}= \Lambda(t_i: i \geq 1) \otimes \Lambda(x_i: i \geq 1), \]
which is an $\FF_2$-algebra, 
 as well as a $\Lambda(\Q_1, \Pt)$-module, where the actions of $\Q_1$ and $\Pt$ are given by \eqref{Q1} and \eqref{P21actionE2} respectively.
  In this section we analyze the $\Lambda(\Q_1, \Pt)$-module structure of $\mathcal{R}$, which leads us to a  description of 
\[ E_{\infty}^{\bullet} \cong \dots \cong E_3^{\bullet} \cong H(E_2^{\bullet}, d_2) \cong \MM(\mathcal{R}, \Pt).  \]
The main idea here is to notice that the action of $\Pt$  is linear with respect to the subalgebra $$\SS := \Lambda(t_ix_i | i \in \mathbb{N}_+)  \subset \mathcal{R},$$  which implies that $\MM(\R, \Pt)$ admits an $\SS$-module structure. 
% In this section we notice that the action of $\Pt$  is linear with respect to the subalgebra $$\SS := \Lambda(t_ix_i: i \in \mathbb{N}_+)  \subset \mathcal{R}.$$ This results in the notion of reduced length (see Definition~\ref{reduceL}). Because of $\SS$-linearity, $\MM(\R, \Pt)$ admits an $\SS$-module structure. In Theorem , we completely describe the $\SS$-module structure of $\MM(\R, \Pt)$, which is also  the $E_{\infty}$ page of \eqref{LSS}.

\begin{lem} \label{lem:S} The subalgebra $\SS \subset \mathcal{R}$ is a  trivial $\Lambda(\Q_1, \Pt)$-submodule which splits off as a $\Lambda(\Q_1, \Pt)$-module. 
\end{lem}
\begin{proof} 
For any element $t_I x_I \in \SS$, it is clear from \eqref{Q1} and  \eqref{d2} that 
\[\Q_1(t_Ix_I)=  0=\Pt(t_I x_I).\] Thus $\SS$ is a trivial submodule.  

Now observe from \eqref{Q1} and  \eqref{d2} that none of the monomials  $t_Ix_I \in \SS$ is  a summand 
of $\Q_1(t_{I'} x_{J'})$ or  $\Pt(t_{I'} x_{J'})$ for any choice of $I'$ and $J'$.  Hence, $\SS$ is a split summand.
%Thus none of the elements of  $\SS$ is a target of a $d_2$ differential, hence nonzero in the $E_{3}$ page. Then the result follows from the fact that \eqref{LSS} collapses at the  $E_3$ page.
\end{proof}
\begin{cor}  \label{cor:S} 
 Every element of $\SS$ is a nonzero cycle in the $\MM(\R, \Pt)$.
\end{cor}
\begin{lem} \label{d2linear}The action of $\Pt$ on $\mathcal{R}$ is $\SS$-linear. 
\end{lem} 
\begin{proof} It is enough to show that  
\begin{equation} \label{check} \Pt(t_{i} x_{i} \cdot  t_{I} x_{J} ) = ( t_{i} x_{i} )\cdot \Pt(t_{I}x_{J}) .
\end{equation}
 If $i \in I$, then $t_{i}t_{I} = 0 $. Hence both the LHS and the RHS of \eqref{check} are zero.
 
  If $i \in J$, then $x_{i} x_{J} = 0$, hence LHS of \eqref{check} is zero. On the other hand, 
\begin{eqnarray*}
RHS &=&  t_{i} x_{i} \cdot  \sum_{ K \in J[2]} t_K t_I x_{J - K} \\
&=&  \sum_{i \in K \in J[2]} t_{i}t_Kt_I x_{i}x_{J-K}  + t_{i} \cdot( \sum_{ i \notin K \in J[2]} t_{i}t_I x_{i}x_{J-K})=0,
\end{eqnarray*}
as $t_{i} t_K =0$ when $i \in K$ and $x_{i} x_{J-K} = 0$ when $i \notin K$. 

Now consider the case when $i \notin I \cup J$.  Let $I' = I \cup \lbrace i \rbrace$  and $J' = J \cup \lbrace i \rbrace$. Then,  
\begin{eqnarray*}
\Pt(t_{i} x_{i} \cdot t_{I} x_J)=  \Pt(t_{I'}x_{J'}) &=&  \sum_{ K \in J'[2]} t_Kt_{I'} x_{J' -K}\\
&=&   \sum_{i \in K \in J'[2]} t_Kt_{I'} x_{J' -K} + \sum_{i \notin K \in J'[2]} t_Kt_{I'} x_{J' -K}\\
&=& \sum_{i \notin K \in J'[2]} t_Kt_{I'} x_{J' -K}   \\
&=& t_{i}x_{i} \cdot \sum_{ K \in J[2]} t_Kt_{I} x_{J-K}  \\
&=& t_{i}x_{i} \cdot \Pt(t_I x_J).
\qedhere
\end{eqnarray*}
\end{proof}
\begin{rem}
%[No product structure on the $E_3$ page]
 While the $E_2$ page of \eqref{LSS} admits an $\FF_2$-algebra structure, the $E_3$ page does not admit any multiplicative structure. This is because the $d_2$ differentials do not follow Leibniz rule and the product of $d_2$ cycles may not be a cycle. For  example,   $x_i$ for all $i \in \mathbb{N}_+$, is a $d_2$-cycle, whereas $x_ix_j$ for $i \neq j$ supports a differential $d_2(x_ix_j)= t_i t_j$ by \eqref{d2}.  Even if $\alpha$, $\beta$ and $\alpha \cdot \beta$ are $\Pt$ cycles it is unclear that the pairing 
 $[\alpha] \cdot [\beta] = [\alpha \cdot \beta ]$
 is well-defined in the $E_3$ page.
 \end{rem}
\begin{cor} \label{cor:S} 
 $\MM(\R, \Pt)$ is a module over the ring $\SS$.
\end{cor}
\begin{proof} 
By Lemma~\ref{d2linear}, there exists a pairing $\mu: \SS \otimes \R \to \R$ such that the diagram 
\[ 
\xymatrix{
\SS \otimes \R \ar[r]^{\mu} \ar[d]_{1 \otimes \Pt} & \R \ar[d]^{ \Pt}  \\
 \SS \otimes \R \ar[r]_{\mu} & \R.
}
\]
commutes.
It follows that $\MM(\R, \Pt)$ is an $\SS$ module. 
\end{proof}

As a result, we only need to understand the action of $\Pt$ on the generators of $\R$
when viewed as an $\SS$-module.
In order to approach this problem we introduce the notion of \emph{reduced length}. 
\begin{defn} \label{reduceL} For any monomial $t_Ix_J \in \R$ the \emph{reduced length} $\ell$ is
\[ \ell(t_Ix_J) =|J-I|=|J \cap I^c| = |J| - |J \cap I| ,
\]
where $I^c$ denotes the complement of $I$.
\end{defn}

Note that the length of $t_Ix_J\in \R$ is given by the formula $L(t_Ix_J) = |J|$; in other words, it is counting the number of factors of $x_J$. Whereas,  $\ell(t_Ix_J)$ counts only those factors $x_{j}$ in $x_J$ for which $t_{j}$ is not a factor of $t_I$.
For example,
 \[
 \ell(x_1)=\ell(t_1x_1x_2)=\ell(t_1t_2x_1x_2x_3)=\ell(t_1t_2t_3x_4)=1
 \]
 \[
 \ell(x_1x_2)=\ell(t_1x_1x_2x_3)=\ell(t_1t_2t_3t_4x_5x_6)=2.
 \]
  \begin{rem}
The  reduced length function $\ell$ measures ``how far"  a given monomial in $\mathcal{R}$ is from the  subalgebra $\SS$.
\end{rem}
   For each $i \in \mathbb{N}_{+} $, let $M_i := \Lambda(\Q_1)\{ x_i \} \subset \mathcal{R}$ denote the $\Lambda(\Q_1, \Pt)$-submodule isomorphic to $\Lambda(\Q_1)$ and generated by $x_i$.  
   %We denote $\Q_1x_i$ by $t_i$. Thus, as an $\FF_2$-vector space $M_i$ has two generators, $t_i$ and $x_i$, such that 
%\begin{equation}\label{eq:defining-M-i}
 % \Q_1(x_i)=t_i \qquad \quad \Q_1(t_i)=0 \qquad \quad \Pt(x_i)=\Q_1(t_i)=0.
  %\end{equation}
   For an indexing set $K \underset{\text{finite}}\subset \mathbb{N}_+$, let  
$$ M_K :=\bigotimes_{j \in K} M_j \subset \mathcal{R}$$
with the convention that $M_\emptyset := \FF_2$. If the indexing set is $[n]=\{1, ..., n\} \subset \mathbb{N}_+$, then we write  $M_{[n]}$ to denote $M_{\{1, ..., n\}}$. 
%In order to be explicit about the action of $\Lambda(\Q_1,\Pt)$ on $M_K$, it is worth reminding ourselves the definition of the tensor product over a Hopf algebra. 

 In Figure \ref{picbasic}, Figure~\ref{pictensor2} and Figure~\ref{pictensor3} we present $M_{i}$, $M_{\{ 1,2\}}$ and $M_{\{ 1,2,3\} }$ respectively as a $\Lambda(\Q_1, \Pt)$-module. In these figures the blue curved lines depict the action of $\Q_1$ and red boxed lines depict the action of $\Pt$.
 \begin{figure}[h]
\centering
\begin{tikzpicture}[scale = .5]
\node (a0) at (0,0) {$t_i $};
\node (a1) at (0,2) {$x_i$};
\draw[blue, bend left] (a0) to (a1);
\end{tikzpicture}
\caption{$M_{i}$ as a module over $\Lambda(\Q_1, \Pt)$}
\label{picbasic}
\end{figure}

Note that the set  $\W := \lbrace t_Ix_J \in \mathcal{R}| I \cap J = \emptyset \rbrace$  forms a generating set for $\R$ as an $\SS$-module as any  monomial $t_Ix_J \in \R$ can be uniquely written as a product of an element of $\W$ and a monomial in $\SS$:
 \[ t_Ix_J = t_{I \cap J} x_{I \cap J} \cdot t_{I - (I \cap J)}x_{J - (I \cap J)}.\] 
%where $t_{I \cap J} x_{I \cap J}\in \SS$ and $ t_{I -J}x_{J - I} \in \W$. 
For any finite subset $K \subset \mathbb{N}_+$, 
\[\W_K:= \set{ t_I x_J | I \cup J = K, I\cap J=\emptyset} \subset \W \]  
forms an $\FF_2$-basis for $M_K$, i.e. $\Ft\langle \W_K \rangle =  M_K. $ Since 
\[ \W = \bigsqcup \limits_{K \underset{\text{finite}}{\subset} \mathbb{N}_{+} } \W_K\] 
and 
$\Ft\langle \W_K\rangle = M_K$ is closed under the action of $\Q_1$ and $\Pt$ (these actions preserve the total indexing set $K$, by \eqref{Q1}  and \eqref{d2}), we learn that 
\[ \mathcal{R} \modmod \SS \cong  \FF_2 \otimes_\SS \mathcal{R} \cong \mathcal{R} \otimes_{\SS} \FF_2 \cong \bigoplus_{K} M_K \]
is an isomorphism of $\Lambda(\Q_1, \Pt)$-modules. Consequently, we have the following lemma. 
\begin{lem} \label{lem:keyiso}Let $\SS_K \subset \SS$ denote the subalgebra $\Lambda(t_Ix_I| I \subset \mathbb{N}_{+}-K)$.  There is a $\Lambda(\Q_1, \Pt)$-module isomorphism 
\[  \bigoplus \limits_{K \underset{\text{finite}}{\subset}\mathbb{N}_{+}} \SS_K \otimes M_K  \cong \mathcal{R} .\]
\end{lem}
\begin{proof} Consider the $\FF_2$-vector space isomorphism 
\[ \iota: \mathcal{R} \to \bigoplus \limits_{K \underset{\text{finite}}{\subset}\mathbb{N}_{+}} \SS_K \otimes M_K \]
which sends \[ t_I x_J \mapsto t_{I \cap J} x_{I \cap J} \otimes t_{I - (I \cap J)} x_{J - (I \cap J)}  \in \SS_K \otimes M_K,\] where $K = I \cup J - I \cap J$. The map $\iota^{-1}$ sends 
\[ t_Bx_B \otimes t_{K -A} x_{A} \mapsto t_{B \cup (K -A) }\cdot x_{B \cup A},\]
where  $A \subset K$.  This map is also a $\Lambda(\Q_1, \Pt)$-module isomorphism as $\SS_K \subset \SS$ is a trivial $\Lambda(\Q_1, \Pt)$-module  by Lemma~\ref{lem:S}. 
 %Keeping in mind that $\SS$ is a trivial $\Lambda(\Q_1, \Pt )$-module (Lemma~\ref{lem:S}) and  \eqref{eq:total-W},   we have isomorphisms of $\Lambda(\Q_1, \Pt )$-modules
%\begin{eqnarray*}
%\SS \otimes( \bigoplus \limits_{K \underset{\text{finite}}{\subset}\mathbb{N}_{+}} M_K) &\iso& \SS \otimes( \bigoplus \limits_{K \underset{\text{finite}}{\subset}\mathbb{N}_{+}} \Ft\langle \W_K\rangle )\\
%&\iso& \SS \otimes \Ft\langle \W \rangle \\
%&\iso& \SS \otimes (\Ft \otimes_{\SS} \R) \\
%&\iso& (\SS \otimes \Ft )\otimes_{\SS} \R \\
%&\iso & \SS \otimes _{\SS} \R \\
%&\iso& \R.
%\end{eqnarray*}
\end{proof}
Hence we can reduce the problem of computing $\MM(\R, \Pt)$ to computing $\MM(M_K, \Pt)$ for various finite subsets $K$ of $\mathbb{N}_+$. Thus, we first need to understand the structure of $M_K$ as a $\Lambda(\Q_1, \Pt)$-module. 
\begin{figure}
\centering
\begin{tikzpicture}[scale = .6]
\node (a0) at (0,0) {$t_1 t_2$};
\node (a1) at (0,2) {$t_1x_2$};
\node (a2) at (4,2) {$t_2x_1 + t_1x_2$};
\node (a3) at (4,4) {$x_1 x_2$};
\draw[blue, bend left] (a0) to (a1);
\draw[blue, bend right] (a2) to (a3);
\draw[red] (a0) -- (2,0) -- (2,4)-- (a3);
\end{tikzpicture}
\caption{$M_{[2]}$ as a module over $\Lambda(\Q_1, \Pt)$, where $[2] = \{ 1, 2 \}$ }
 \label{pictensor2}
\end{figure} 
\begin{figure}
\centering
\begin{tikzpicture}[scale = .7]
\node (a0) at (0,0) {$t_1t_2t_3$};
\node (a1) at (-2,2) {$t_1t_2x_3 + t_1t_3x_2 + t_2t_3x_2$};
\node (a2) at (2,4) {$t_3x_1x_2 + t_2x_1x_3 + t_1x_2x_3$};
\node (a3) at (0,6) {$x_1x_2x_3$};
\node (a4) at (7,2) {$t_1t_2x_3 + t_2t_3x_1$};
\node (a5) at (7,4) {$t_3x_1x_2 + t_1x_2x_3$};
\node (a6) at (11,2) {$t_1t_3x_2 + t_1t_2x_3$};
\node (a7) at (11,4) {$t_2x_1x_3 + t_3x_1x_2$};
\draw[blue, bend left] (a4) to (a5);
\draw[blue, bend left] (a6) to (a7);
\draw[blue, bend left] (a0) to (a1);
\draw[blue, bend right] (a2) to (a3);
\draw[red] (a0) -- (2,0) --  (a2);
\draw[red] (a1) -- (-2,6) -- (a3);
\end{tikzpicture}
\caption{$M_{[3]}$ as a module over $\Lambda(\Q_1, \Pt)$,  where $[3] = \{ 1, 2, 3 \}$}
\label{pictensor3}
\end{figure}

\begin{rem}\label{rem:MJisMn}Let  $[n]$ denote the indexing set $\{1, ..., n\} \subset \mathbb{N}_+$. 
If $|K| = n$, then there exists the unique order preserving bijection
 \[\iota: [n] \to K\]
 and it
induces  an isomorphism $\iota: M_{[n]}\xra{\cong} M_K$. Thus it is enough to understand $\Lambda(\Q_1, \Pt)$-module structure of $M_{[n]}$ for all $n \in \mathbb{N}_+$.
\end{rem}

As depicted in Figure~\ref{pictensor3}, $M_{[3]}$ splits as a   $\Lambda(\Q_1, \Pt)$-module 
\begin{equation}
\label{eqn:M3}
\begin{array}{cccc}M_{[3]} \iso&&  \Lambda(\Q_1, \Pt)\set{x_1x_2x_3} \\ & \oplus& \Lambda(\Q_1)\set{t_3x_1x_2 + t_1x_2x_3}  \\& \oplus& \Lambda(\Q_1)\set{t_2x_1x_3 + t_3x_1x_2}
\end{array}
\end{equation}
as  summand of a free  $\Lambda(\Q_1, \Pt)$-module  and two copies of $\Lambda(\Q_1)$.
\begin{rem}
The splitting of \eqref{eqn:M3} is a consequence of Lemma \ref{lem:P21Q1=Q1P21}.
Since $\Lambda( \Q_1, \Pt)$ is cocommutative, for any $\Lambda( \Q_1, \Pt)$-module $M$ and $\sigma \in \FF_2[\Sigma_n]$, the induced map 
\[ \sigma : M^{\otimes n} \to M^{\otimes n}\]
is a map of $\Lambda( \Q_1, \Pt)$-modules. Note that in the group ring $\FF_2[\Sigma_3]$, the identity element can be written as a sum of idempotent elements 
\[ \mathbbm{1} = e + f_1 + f_2. \]
For example, one can choose $e = \mathbbm{1} + (1 \ 2 \ 3 ) + (1 \ 2 \ 3)$, $f_1= \mathbbm{1} + (1 \ 2) + (1 \ 3 ) + (1 \ 3 \ 2)$ and $f_2 = \mathbbm{1} + (1 \ 2) + (1 \ 3 ) + (1 \ 2 \ 3)$.  Then we have
\[ M^{\otimes 3} \cong e(M^{\otimes 3}) \oplus f_1(M^{\otimes 3}) \oplus f_2(M^{\otimes 3}). \] 
When  $M\cong\Lambda(\Q_1)$, we get the decomposition of of \eqref{eqn:M3}. 
\end{rem}
%\begin{equation}
%\label{eqn:M3}
%M_{[3]} \iso \Lambda(\Q_1, \Pt)\set{t_1t_2t_3}\oplus\Lambda(\Q_1)\set{t_1t_2x_3 + t_2t_3x_1} \oplus \Lambda(\Q_1)\set{t_1t_3x_2 + t_1t_2x_3}.
%\end{equation}
The splitting of \eqref{eqn:M3},  along with the following fact about finite dimensional Hopf algebras, is the key to understanding the structure of  $M_K$.
\begin{thm}[{\cite{NicholsZoeller}}] \label{thm:finitehopf} If $\mathcal{H}$ is a finite dimensional connected Hopf algebra over a field $\FF$, then for any $\mathcal{H}$-module $M$, $\mathcal{H} \otimes M$ is a free $\mathcal{H}$-module. 
\end{thm}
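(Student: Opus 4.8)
The plan is to prove the statement by the classical \emph{tensor identity}: show that, up to $E$-linear isomorphism, the diagonal $E$-module $E\otimes M$ does not depend on the module structure of $M$, only on its dimension. Concretely, write $\Delta$, $\epsilon$, $S$ for the coproduct, counit and antipode of $E$, use Sweedler notation $\Delta(e)=\sum e_{(1)}\otimes e_{(2)}$, and let $M_{\mathrm{triv}}$ denote the underlying vector space of $M$ with the trivial action $e\cdot m=\epsilon(e)m$. Equip $E\otimes M_{\mathrm{triv}}$ with the $E$-action coming from the left regular action on the \emph{first} factor alone; then any $\FF$-basis $\{m_\alpha\}$ of $M$ gives $E\otimes M_{\mathrm{triv}}\cong\bigoplus_\alpha E$, which is free. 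So it is enough to construct an $E$-linear isomorphism $E\otimes M_{\mathrm{triv}}\to E\otimes M$, where the target carries the diagonal action $e\cdot(f\otimes m)=\sum e_{(1)}f\otimes e_{(2)}m$ dictated by the paper's tensor product convention.

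\textbf{The maps.} The second step is to write down $\phi\colon E\otimes M_{\mathrm{triv}}\to E\otimes M$ by $\phi(e\otimes m)=\sum e_{(1)}\otimes e_{(2)}m$, together with $\psi\colon E\otimes M\to E\otimes M_{\mathrm{triv}}$ given by $\psi(e\otimes m)=\sum e_{(1)}\otimes S(e_{(2)})m$, and to check three formal points: (i) $\phi$ is $E$-linear, since expanding $\phi(a\cdot(e\otimes m))=\phi(ae\otimes m)$ and $a\cdot\phi(e\otimes m)$ both yield $\sum a_{(1)}e_{(1)}\otimes a_{(2)}e_{(2)}m$ by coassociativity and multiplicativity of $\Delta$; (ii) $\psi\phi=\mathrm{id}$, which collapses via the antipode identity $\sum S(e_{(1)})e_{(2)}=\epsilon(e)1$ followed by the counit axiom; and (iii) $\phi\psi=\mathrm{id}$, which collapses symmetrically via $\sum e_{(1)}S(e_{(2)})=\epsilon(e)1$. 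Composing, $E\otimes M\cong E\otimes M_{\mathrm{triv}}$ as $E$-modules, and the latter is free; this proves the theorem. (Only the existence of an antipode is used here, not its bijectivity, though for a finite-dimensional Hopf algebra the latter is automatic by Larson--Sweedler; finite-dimensionality is really only what makes ``free'' the useful conclusion for the applications in this paper.)

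\textbf{Main obstacle.} There is no deep obstacle: the entire content is bookkeeping with Sweedler notation, and the only thing one must be careful about is that the $E$-action on $E\otimes M_{\mathrm{triv}}$ is through the first factor \emph{only} (so that $\phi$ is manifestly left-$E$-linear) while the action on $E\otimes M$ is the \emph{diagonal} one from the paper's tensor-product convention. No homotopy-theoretic or Steenrod-algebra input is needed; in the paper this general fact will be applied with $E=\Lambda(\Q_1,\Pt)$, together with the decomposition \eqref{eqn:M3}, to peel off the free $\Lambda(\Q_1,\Pt)$-summands of the modules $M_J$ and thereby reduce the computation of $\MM(M_J,\Pt)$ to the non-free part.
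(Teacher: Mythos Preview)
Your argument is correct: the shearing map $\phi(e\otimes m)=\sum e_{(1)}\otimes e_{(2)}m$ with inverse $\psi(e\otimes m)=\sum e_{(1)}\otimes S(e_{(2)})m$ is the standard ``tensor identity'' and the verifications you outline go through exactly as you say. The paper itself does not prove this theorem at all; it is stated with a citation to Nichols--Zoeller and used as a black box in the proof of Lemma~\ref{lem:Mn}. So there is no proof in the paper to compare against.

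One remark worth making is that your approach is actually more elementary than the citation suggests. The Nichols--Zoeller theorem proper is the considerably deeper statement that a finite-dimensional Hopf algebra is free over every Hopf subalgebra; the fact needed here, that $E\otimes M$ is free for any $E$-module $M$, is---as you observe---a formal consequence of the antipode axioms and requires neither finite-dimensionality of $E$ nor the Nichols--Zoeller machinery. Your parenthetical comment to this effect is accurate, and your identification of the role this theorem plays in the paper (stripping off the free $\Lambda(\Q_1,\Pt)$-summands in the inductive analysis of $M_J$) is exactly right.
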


%\begin{rem}
%There are multiple ways to choose a basis for $M_J$.
% The basis of $M_{[3]}$ we chose in Figure~\ref{pictensor3} is $\mathcal{B}_{[3]}$ in Definition~\ref{defn:name} in Section~\ref{basis}. 
%However, for the purposes of this section, this choice will not play any role other than the fact that it is convenient to see the isomorphism of \eqref{eqn:M3} in Figure \ref{pictensor3}.
%\end{rem}

%\begin{defn} Let $\mathcal{E}$ be any $\Ft$-algebra. Let $M$ and $N$ be $\mathcal{E}$-modules. We say that $M$ is stably isomorphic to $N$ if 
%\[
%F \oplus M \iso F' \oplus N 
%\]
%where $F$ and $F'$ are free $\mathcal{E}$-modules. 
%\end{defn}
Let us denote by $A$ the $\Lambda(\Q_1, \Pt)$-module isomorphic to $\Lambda(\Q_1)$ and let $B:= A \otimes A$.
% and denote by $A[n]$ the module obtained by shifting the internal degrees of $A$ by $n$.

Then using \eqref{eqn:M3} and  Theorem~\ref{thm:finitehopf}, we notice  that
\begin{gather*}
M_{[3]} \cong B \otimes A \cong \{\text{Free}\} \oplus A^{\oplus 2}\\
M_{[4]} \cong \{\text{Free}\} \oplus B^{\oplus 2}\\
M_{[5]} \cong \{\text{Free}\} \oplus A^{\oplus 4}
\end{gather*}
where $\set{\text{Free}}$ denotes a free $\Lambda(\Q_1, \Pt)$ module.
%If we want to specify the generators, we can write, for example, $A\{t_3\}=M_3$ or $B\{t_1t_2\}=M_{[2]}$.
 % Note that, as $\Lambda(\Q_1, \Pt)$ modules we have the isomorphisms
  %\[B\cong A\otimes A\]
   %and \[M_{[3]}\cong B\otimes A\cong \{\text{Free}\}\oplus A \oplus A.\]
%When we compute $M_{[4]}$ using Theorem \ref{thm:finitehopf}, we have 
%\[
%M_{[4]} \cong \{\text{Free}\} \oplus B \oplus B.
%\]
This iterative process can be continued as described in Lemma \ref{lem:Mn} below. We use $A\set{y}$, resp. $B\set{y}$, to specify that $y$ generates $A$, resp. $B$, as a $\Lambda(\Q_1, \Pt)$ module. For example, $M_i\cong A\set{x_i}$.
\begin{lem}\label{lem:Mn}
As a $\Lambda(\Q_1, \Pt)$-module,
\[
M_{[2r+1]} \cong \{\text{Free}\}\oplus (\bigoplus_{i=1}^{2^r} A\{h_{2r+1,i}\}),
\]
 where $\ell(h_{2r+1,i})=r+1$.

As a $\Lambda(\Q_1, \Pt)$-module,
\[M_{[2r]}\cong \{\text{Free}\} \oplus \bigoplus_{i=1}^{2^{r-1}}  B\{h_{2r,i}\},
\] 
 where $\ell(h_{2r,i})=r+1$. 
\end{lem}
\begin{proof}
Our proof is by induction on $r.$
From Figure~\ref{picbasic}, Figure~\ref{pictensor2} and Figure~\ref{pictensor3}, the claim is true for $k= 1, 2,3$. Note that 
\[
h_{1,1}=x_1 \qquad \quad h_{2,1}=x_1x_2 \qquad  \quad h_{3,1}=(t_3x_1+x_3t_1)x_2 \qquad h_{3,2}=(t_2x_3+t_3x_2)x_1
\]
Now assume that the result is true for $2r-1$, i.e. 
\[ M_{[2r-1]} \iso \{\text{Free}\} \oplus \bigoplus_{1 \leq  i  \leq 2^{r-1}} A\{h_{2r-1,i}\}\]
where $\ell(h_{2r-1,i})=r$   and $\{\text{Free}\}$ is a free $\Lambda(\Q_1, \Pt)$-module. It follows that 
 \[ M_{[2r]} \iso 
M_{[2r-1]} \otimes M_{2r} \iso 
 (\{\text{Free}\} \otimes A\{x_{2r}\}) \oplus \bigoplus_{1 \leq  i  \leq 2^{r-1}} B\{h_{2r-1, i} \cdot  x_{2r}\}. \] 
 By Theorem~\ref{thm:finitehopf}, the first summand is, again, a free module. Set 
 \[h_{2r, i} = h_{2r-1, i} \cdot  x_{2r}\]
  and notice 
 $\ell(h_{2r-1,i} x_{2r}) = \ell(h_{2r-1,i}) + \ell(x_{2r}) = r+1. $
 
 To complete the inductive argument, observe 
 \begin{eqnarray*}
    M_{[2r+1]}  &\iso& M_{[2r-1]} \otimes B\{x_{2r} x_{2r+1}\}  \\ 
   &\iso & (\{\text{Free}\} \oplus \bigoplus\limits_{1 \leq  i  \leq 2^{r-1}} A\{h_{2r-1,i}\}) \otimes B\{x_{2r} x_{2r+1}\} \\
   &\iso &\{\text{Free}\} \oplus  \bigoplus\limits_{1 \leq  i  \leq 2^{r-1}} (A\{h_{2r+1, 2i-1}\} \oplus A \{ h_{2r+1, 2i} \}), 
 \end{eqnarray*}
 where  one can define the generators $h_{2r-1,j}$ from  Figure~\ref{pictensor3}  by replacing $x_1, x_2, x_3$ with $h_{2r-1,i}$, $x_{2r}$ and $x_{2r+1}$ respectively.  More specifically, one can define 
 \[ h_{2r+1, 2i-1} =  \Q_1(h_{2r-1,i} \cdot x_{2r+1} ) \cdot x_{2r} \]
 \[ h_{2r+1, 2i} = h_{2r-1,i} \cdot  \Q_1( x_{2r} x_{2r+1}) . \]
 It is easy to check that $\ell(h_{2r+1, j}) = r+1 $.
 %The result follows from the $\Lambda(\Q_1, \Pt)$ module structure of $M_{[3]}$ (see Figure~\ref{pictensor3}). 
\end{proof}

%Note that, $M_J \iso M_{[n]}$, where $|J|=n$. Therefore we can conclude that:
%\begin{cor}\label{cor:MJ}
%Let $J=\set{i_1, \ldots, i_n}\subset \mathbb{N}.$ If $n=2t+1$ then $M_J$ is  isomorphic to a direct sum of a free module  with  $2^t$ copies of $A$. If $n=2t,$ then $M_J$ is  isomorphic to a direct sum of a free module with $2^{t-1}$ copies of $B$.
%\end{cor} \prasit{This is repeating the previous result, but in words.  }
Following the proof of Lemma~\ref{lem:Mn},  we can provide an explicit basis of $\MM(M_K, \Pt)$.  By Remark \ref{rem:MJisMn} it suffices to provide a basis for $\MM(M_{[n]}, \Pt)$ for all $n \geq 1$. We do so inductively (see Definition~\ref{defn:name}), however we must treat the odd and the even case separately, essentially because of Lemma~\ref{lem:Mn}. Since $A$ is a trivial $\Lambda(\Pt)$-module, $\MM(A, \Pt)\cong A$, and we get 
\[ \MM(M_{[2r+1]}, \Pt) \cong \MM\left(\bigoplus_{i=1}^{2^r} A\{h_{2r+1,i}\}, \Pt\right) \cong \bigoplus_{i=1}^{2^r} A\{h_{2r+1,i}\}. \]
 Thus the collection 
 \[ \{ h_{2r+1,i}: 1 \leq i \leq 2^r \} \cup \{ \Q_1( h_{2r+1,i}): 1 \leq i \leq 2^r \}\] 
  is an $\FF_2$-basis of $\MM(M_{[2r+1]}, \Pt).$ When $n$ is even, say $n =2r$, then  
 \[ \MM(M_{[2r]}, \Pt) \cong  \bigoplus_{ i = 1 }^{2^{r-1} }\MM( B \{ h_{2r, i }\}, \Pt ).\]
 Now note that, if $B\{ x\otimes y\} = A \{ x\} \otimes A\{ y\}$ (where $x$ and $y$ are generators), then \[ \{ \Q_1(x) \otimes y , x \otimes \Q_1 (y)\} \]  is an $\FF_2$-basis of $\MM(B \{ x \otimes y\}, \Pt)$. Using the fact that 
 \[ h_{2r, i} = h_{2r-1, i} \cdot x_{2r},\] 
 we get   Corollary~\ref{cor:dimension} and Definition~\ref{defn:name} thereafter.

\begin{cor} \label{cor:dimension} Let $\MM(M_K, \Pt)_l = \set{x \in \MM(M_K, \Pt)| \ell(x) = l}$. 

If  $|K| = 2r+1$, then  
 \[\dim  \MM(M_K, \Pt)_l = \left\lbrace \begin{array}{cccc}
2^{r}, & \text{if $l=r, r+1$} \\
0, & \text{otherwise.}
 \end{array}\right. \]
 If $|K| = 2r$, then 
\[\dim  \MM(M_K, \Pt)_l = \left\lbrace \begin{array}{cccc}
2^r, & \text{if $l=r$} \\
0, & \text{otherwise.}
 \end{array}\right. \]

\end{cor}
\begin{proof}
Lemma~\ref{lem:Mn} implies 
\[ 
M_{[2r+1]} \iso \{\text{Free}\} \oplus \bigoplus_{1 \leq i \leq 2^{r}} A\set{h_{2r+1,i}} 
\]
where $\ell(h_{2r +1, i}) = r+1.$ By Lemma \ref{Lem:action-reduces-length}  we have $\ell(\Q_1(h_{2r +1, i})) = r$. Thus $\{ h_{2r+1, i} \}$ is the  basis for  $\MM(M_{[2r+1]}, \Pt)_{r+1}$ and  
$\{ \Q_1(h_{2r+1, i})\} $ is the basis for $\MM(M_{[2r+1]}, \Pt)_{r}$. 
Applying Remark \ref{rem:MJisMn} we deduce the statement about dimension for any $M_K$ with $|K|=2r+1$.

For the even case we have from Lemma~\ref{lem:Mn} 
\[ M_{[2r]} \iso \{\text{Free}\} \oplus \bigoplus_{1 \leq i \leq 2^{r-1}} B\set{h_{2r, i}} \]
where $\ell(h_{2r,i}) = r+1$. Then for each $i$, $\MM(B\set{h_{2r, i}}, \Pt) = \MM(B\set{h_{2r, i}}, \Pt)_{r} $ is an $\FF_2$ vector space of dimension $2$ generated by $\{ h_{2r-1, i } \cdot t_{2r}, \Q_1(h_{2r-1,i} )\cdot x_{2r}\}$ .
\end{proof}

\begin{defn} \label{defn:name} We define the basis $\mathcal{B}_{[n], l}$ of $\MM(M_{[n]}, \Pt)_l$ for $0 \leq l \leq n$ inductively starting with $\mathcal{B}_{[1], 0}= \{t_1\}$ and $\mathcal{B}_{[1],1}= \{x_1 \}$. Suppose we have defined 
\[ 
 \mathcal{B}_{[2r-1], l} := \left\lbrace
\begin{array}{cll} 
  \lbrace h_{2r-1, 1}, \dots, h_{2r-1, 2^{r-1}} \rbrace &\text{ if $l = r$} \\
    \lbrace \Q_1(h_{2r-1,1}), \dots, \Q_1(h_{2r-1, 2^{r-1}}) \rbrace &\text{ if $l = r-1$} \\
    \emptyset & \text{ otherwise.}
 \end{array} \right.
 \]
 Then define:
 \[ 
 \mathcal{B}_{[2r], r} := 
 \{  h_{2r-1, 1} \cdot t_{2r}, \dots, h_{2r-1, 2^{r-2}} \cdot t_{2r}  \} \cup  \{ \Q_1( h_{2r-1, 1} )\cdot x_{2r}, \dots, \Q_1( h_{2r-1, 2^{r-2}}) \cdot x_{2r}  \}  \]
and set $\mathcal{B}_{[2r], l} := \emptyset$ if $l \neq r$.  

Now define $h_{2r+1, 2i-1} = \Q_1(h_{2r-1, i} ) \cdot ( x_{2r+1} \cdot x_{2r})$ and  $h_{2r+1, 2i} = h_{2r-1, i} \cdot \Q_1(x_{2r}x_{2r+1})$ and set 
\[ 
 \mathcal{B}_{[2r+1], l} := \left\lbrace  \begin{array}{cll}
  \lbrace h_{2r+1, 1}, \dots, h_{2r+1, 2^{r-2}} \rbrace &\text{ if $l = r+1$} \\
    \lbrace \Q_1(h_{2r+1,1}), \dots, \Q_1(h_{2r+1, 2^{r-2}}) \rbrace &\text{ if $l = r$} \\
    \emptyset & \text{ otherwise.}
 \end{array} \right.
 \]
We let $\mathcal{B}_{[n]}$ denote the union $\bigcup_{l}  \mathcal{B}_{[n], l}$.  Let $\mathcal{B}_K$ denote the image of the $\mathcal{B}_{[n]}$ under the isomorphism  
$
\iota: M_{[n]} \to M_{K}
$
of Remark \ref{rem:MJisMn}.
\end{defn}

\begin{ex}[Examples of $\mathcal{B}_{K}$] \label{ex:sample}
We explicitly identify $\mathcal{B}_{[n]}$ using Definition~\ref{defn:name} for $n \leq 4$, and for $n=1,2,3$ we can compare to Figures \ref{picbasic}, \ref{pictensor2} and \ref{pictensor3}, to see that $\mathcal{B}_{[n]}$ is indeed the basis for $\MM(M_{[n]}, \Pt)$.
\begin{itemize}
\item  $\mathcal{B}_{[1]} = \set{t_1, x_1}$,
\item  $\mathcal{B}_{[2]} = \set{t_1x_2, t_2x_1}$,
\item $\mathcal{B}_{[3]} = \set{t_1t_2x_3 + t_1t_3x_2, t_1t_2 x_3 + t_2t_3x_1 }\cup \set{ t_3 x_1x_2 + t_2x_1x_3, t_3x_1x_2  + t_1x_2x_3 }$, and,
\item $\mathcal{B}_{[4]} = \set{t_1t_2x_3x_4 + t_1t_3x_2x_4, t_1t_2 x_3x_4 + t_2t_3x_1x_4, t_3t_4 x_1x_2 + t_2t_4x_1x_3, t_3t_4x_1x_2  + t_1t_4x_2x_3}.$
\end{itemize}
\end{ex}
Let $P_K := \FF_2 \langle \mathcal{B}_{K} \rangle \subset M_K$. By construction, $P_K$ is a trivial split $\Lambda(\Pt)$-submodule of $M_K$. The splitting of $P_K$ follows from the fact that its complement is free and the fact that $\Lambda(\Pt)$, like any other finite dimensional connected Hopf algebra, is self-injective \cite[Ch. 12, \S 2]{Margolis_book}.  

\iffalse
Now we are ready to prove our main theorem. The main idea is as follows: consider an element $b\in \mathcal{B}_{J}$ for some $J$ which is an element of $\MM(\tmf, \Pt)$. Since the action of $\Pt$ is linear with respect to the elements from $\mathcal{S}=\set{x_It_I|I \subset \mathbb{N}_{+}}$, we can define a product $x_It_I\otimes b \in S\otimes \MM(\tmf, \Pt)$. There are two possibilities:
\begin{enumerate}
\item If $I\cap J\neq \emptyset$, then  $x_It_I\otimes b=0$, since each $x_i^2=t_i^2=0$;
\item If $I\cap J= \emptyset$, then $x_It_I\otimes b$ is a nonzero element in $\MM(\tmf, \Pt)$ because of Lemma~\ref{d2linear}.
\end{enumerate}
As a simple example, consider the element 
 \[ x_1t_2 \in  \MM(M_{[2]}, \Pt) \subset \MM(\tmf, \Pt).\] Then the elements such as  $ x_3t_3 \cdot x_1 t_2 $ and  $ x_4t_4 \cdot x_1 t_2$ are nonzero elements in  $\MM(\tmf, \Pt)$, whereas \[ x_1t_1 \cdot x_1t_2 =x_1^2t_1t_2=0.\]
This observation leads to the next theorem.
\fi

\begin{thm}\label{thm:basistmf} Let $K$ be a finite subset of $\mathbb{N}_{+}$. 
%Let$\mathcal{S}_K = \Lambda(t_ix_i|i \in \mathbb{N}_{+} -K)$, and let
Let \[ \mathcal{SB}_K:=  \lbrace t_{I}x_{I} \cdot b\,|\, I \cap K = \emptyset \text{ and } b \in \mathcal{B}_K  \rbrace \subset \mathcal{R}. \] 
Then 
\[
\mathcal{B} := \bigsqcup\limits_{K \underset{\text{finite}}{\subset}\mathbb{N}_{+}} \mathcal{SB}_K
\]
forms a basis of the $\Ft$-vector space $\MM(\tmf, \Pt)$ and
\[ \MM(\tmf, \Pt)   \iso 
 \bigoplus \limits_{K \underset{\text{finite}}{\subset}\mathbb{N}_{+}}  \Ft \langle \SS\mathcal{B}_K \rangle
\iso \bigoplus \limits_{K \underset{\text{finite}}{\subset}\mathbb{N}_{+}} \SS_K \otimes  \MM(M_K, \Pt) .
\]
is an isomorphism of $\Ft$-vector spaces.
\end{thm}
\begin{proof} 
By Lemma \ref{lem:keyiso}, we have a $\Lambda(\Q_1, \Pt)$ module isomorphism 
\[
\mathcal{R} \cong \bigoplus_{K \underset{\text{finite}}{\subset} \mathbb{N}_+}  \SS_K \otimes M_K .
\]
Therefore, the linearity of the action of $\Pt$ (see Corollary~\ref{cor:S}) with respect to elements in $\SS$ gives us 
\begin{eqnarray*}
\MM(\tmf, \Pt) &\iso& \MM(\R, \Pt) \\ 
&\iso& \MM( \bigoplus_{K \underset{\text{finite}}{\subset} \mathbb{N}_+} \SS_K \otimes M_K, \Pt) \\
&\iso&  \bigoplus_{K \underset{\text{finite}}{\subset} \mathbb{N}_+} \SS_K \otimes \MM( M_K, \Pt) \\
&\iso&  \bigoplus_{K \underset{\text{finite}}{\subset} \mathbb{N}_+} \SS_K \otimes P_K \\
&\iso& \bigoplus_{K \underset{\text{finite}}{\subset} \mathbb{N}_+} \FF_2 \langle \SS\mathcal{B}_K \rangle.
\qedhere
\end{eqnarray*}
%The last isomorphism above follows from the observation that  for  $s \in \SS$ and $w \in \Ft\langle \W_J \rangle$ $s \otimes w \neq 0$ if and only if  $s \in \SS_J$. Consequently  an $\mathbb{F}_2$ vector space $\MM(\tmf, \Pt)$   has basis $\mathcal{B}$, and as an $\mathcal{S}$ module $\MM(\tmf, \Pt)$ is generated by $\bigsqcup\limits_{J \underset{\text{finite}}{\subset}\mathbb{N}_{+}} \mathcal{B}_J$.
\end{proof}
\begin{rem} Let $e$ denote the exchange map $e: \mathcal{R} \to \mathcal{R} $
which sends \[e: t_I x_J \mapsto t_J x_I.\] It seems to be the case that 
%$0 \neq m  \in \R$  represents a class in $\MM(\tmf, \Pt)$ (i.e. $m$ is a $\Pt$-cycle and not a $\Pt$-boundary) so does $e(m)$. 
$[m] \in \MM(\tmf, \Pt)$ if and only if $[e(m)] \in \MM(\tmf, \Pt)$. 
The source of such symmetry is unclear to the authors, although it might be related to Spanier--Whitehead duality.
\end{rem}
Finally, we would like to say a word about the module structure of $\MM(\tmf, \Pt)$ over $\mathcal{S}$. Note that the collection of elements 
\[ \mathcal{B}_{\mathcal{S}} := \{ t_Ix_I| I  \underset{\text{finite}}\subset \mathbb{N}_+ \}\] 
forms an $\FF_2$-basis of $\mathcal{S}$. The $\mathcal{S}$-module structure on $\MM(\tmf, \Pt)$ is extended from a pairing at the level of bases
\begin{align*}
\mathcal{B}_{\mathcal{S}} \otimes \mathcal{S} \mathcal{B}_K &\xra{\mu} \mathcal{S}\mathcal{B}_K \\
s \otimes( s' \cdot  b)&\mapsto 
\begin{cases}
(s \cdot s' ) \cdot b, \text{ if } I \cap  K = \emptyset  \\
0, \text{ if } I \cap K \neq \emptyset.
\end{cases} 
\end{align*}

\begin{rem} Recall that $H_*(\tmf)$ was described in terms of $\zeta_i$. We can convert an element of the Margolis homology expressed in terms of $t_i$ and $x_i$  back to an expression involving $\zeta_i$ using the identifications of Notation~\ref{Notn:xt}. For example, \[ t_4t_9x_2x_6 + t_2t_9x_4x_6 \]
can be identified with the  class represented by element $\zeta_5^5\zeta_{10}^4 \zeta_9 + \zeta_3^4 \zeta_{10}^4 \zeta_7 \zeta_9 \in \mathfrak{T}$.
\end{rem}

\section{\texorpdfstring{$P_2^1$}{P21} Margolis homology of $\tmf^{\sma r}$ and $B(\mathbb{Z}/2^{\times n})_+$} \label{sec:misc}
\subsection{$\Pt$ Margolis homology of  $\tmf^{\sma r}$}
Note that $$H_*(tmf^{\wedge r})\iso H_*(tmf)^{\otimes r} \iso  \mathfrak{T}^{\otimes r}.$$ We first extend the notion of length to $\mathfrak{T}^{\otimes r}.$ For a monomial  $\zeta^{I_1}| \dots | \zeta^{I_r}$ for $\zeta^{I_i} \in \mathfrak{T}^{\otimes r}$, which is a tensor product of monomials in $\mathfrak{T}$, we define 
\[ L(\zeta^{I_1}| \dots | \zeta^{I_r}) = L(\zeta^{{I}_1}) + \dots + L(\zeta^{I_r}). \]
We define the even subalgebra $\mathbb{E}_r$  of $\mathfrak{T}^{\otimes r}$ as  the span of those monomials in $\mathfrak{T}^{\otimes r}$ whose lengths are zero. Observe that, 
 $$\mathbb{E}_r \iso \mathcal{E}^{\otimes r}.$$
 Notion of length leads to an increasing filtration on $\mathfrak{T}^{\otimes r}$, call it the length filtration, by setting 
\[
   G^p(\mathfrak{T}^{\otimes r})=\set{ (\zeta^{I_1} | \dots |  \zeta^{I_r}) \,|\, L(\zeta^{I_1}| \dots | \zeta^{I_r} )\leq p}.
\]
 Let $\mathbb{K}_r = \mathcal{K}^{\otimes r}$, where $\mathcal{K}$ is as defined in Section~\ref{Sec:Margolis}.  Just like in the case $r=1$, we get a length spectral sequence and its $E_1$ page is 
 \begin{equation} \label{LSSR}
 E_1^{\bullet} \iso \MM(\mathbb{E}_r, \Pt ) \otimes \mathbb{K}_r \Rightarrow \MM(\tmf^{\sma r}, \Pt). 
 \end{equation}
 Since action of $\Pt$ follows the Leibniz rule when restricted to $\mathcal{E}$, we get  \[ \MM(\mathbb{E}_r, \Pt ) \iso \MM(\mathcal{E},\Pt)^{\otimes r}. \] 

 \begin{notn} \label{notn:xij} For shorthand, we denote $x_{i,j} = (\underbrace{1| \dots |1}_{j-1} |\zeta_{i+3}|\underbrace{1| \dots |1}_{r-j} )$ and $t_{i,j} = (\underbrace{1| \dots |1}_{j-1} |\zeta_{i+1}^4|\underbrace{| \dots |1}_{r-j} )$. With this notation we have $$\Q_1(x_{i,j}) = t_{i,j}.$$ 
 \end{notn}
 Using Notation~\ref{notn:xij}, we see that the $E_1$ page of the length spectral sequence \eqref{LSSR}, as an algebra, is  isomorphic to 
 \[ \mathcal{R}_r :=  \Lambda(t_{i,j}: i \in \mathbb{N}- \set{0}, 1 \leq j \leq r) \otimes \Lambda(x_{i,j}: i \in \mathbb{N}- \set{0}, 1 \leq j \leq r). \]
 It is easy to see that the map induced by the reindexing map $$\iota: (i,j) \mapsto r(i-1) + j,$$ produces a (non-canonical) isomorphism of algebras  between $\mathcal{R}_r$ (the  $E_2$ page  of  \eqref{LSSR}) and $\mathcal{R}$ (the  $E_2$ page of  \eqref{LSS}), after forgetting the internal grading. This is also an isomorphism of $\Lambda(\Q_1, \Pt)$-modules. 
 Thus we have an isomorphism
\[ \iota_*: \MM(\tmf, \Pt) \overset{\iso}\longrightarrow  \MM(\tmf^{\sma r}, \Pt)\]
induced by the $\iota$.
 Therefore, Theorem~\ref{thm:basistmf} essentially gives a complete calculation of $\MM(\tmf^{\sma r}, \Pt)$. 
 %Of course, $\iota_*$ does not preserve internal grading of elements. 
\begin{ex}
For example, let us assume $r =3$. Then the element $t_2t_4x_6x_9 + t_2t_6x_4x_9 \in \MM(\tmf, \Pt)$ (see Example~\ref{ex:sample}) corresponds to the element 
\[ t_{1,2}t_{2,1}x_{2,3}x_{3,3} + t_{1,2}t_{2,3}x_{2,1}x_{3,3} \in \MM(\tmf^{\sma 3}, \Pt)  \]
under the bijection obtained from the above reindexing. 
When expressed in terms of $\zeta_i$s (see Notation~\ref{notn:xij}), the same element can be expressed as 
\[\zeta_3^4| \zeta_2^4| \zeta_5 \zeta_6| 1 + \zeta_5| \zeta_2^4|\zeta_3^4\zeta_6|1.\]
 \end{ex}
 \begin{rem}[$\Pt$ Margolis homology of Brown--Gitler spectra]
 It is well-known that 
 \[ H_*(\tmf) \iso \bigoplus_{i \geq 0} H_*(\Sigma^{8i}bo_i)  \]
 where $bo_i$ are certain Brown--Gitler spectra associated with $bo$. In \cite{Mah81} Mahowald defined a multiplicative weight function, which is given by $w(\zeta_i)= 2^{i-1}$. $H_*(\Sigma^{8i}bo_i)$ is the summand of $H_*(\tmf)$ which consists of elements of Mahowald weight exactly equal to $8i$. 
 %Since, $t_{i,j}$ and $x_{i,j}$  equal  $(1| \dots |1 |\zeta_{i+1}^4|1| \dots |1 )$ and $(1| \dots |1 |\zeta_{i+3}|1| \dots |1 )$ respectively,
 We assign Mahowald weight of  $t_{i,j}$ and $x_{i,j}$ as
 \[ w(t_{i,j}) = w(x_{i,j}) = 2^{i+1}.\]
 It follows that the Margolis homology $\MM(bo_{q_1} \sma \dots \sma bo_{q_r}, \Pt)$ is a summand of $\MM(\tmf^{\sma r}, \Pt)$. It consists of all polynomials of $\MM(\tmf^{\sma r}, \Pt)$ expressed in terms of $x_{i,j}$ and $t_{i,j}$ such that $w(x_{i,j})= w(t_{i,j}) = 4q_j$.  
 \end{rem}
 \begin{rem} While it is true that $\mathcal{R}_r \cong \mathcal{R}^{\otimes r}$, as an $\FF_2$-algebra as well as an $\Lambda(\Q_1, \Pt)$-module, it is not useful for the purposes of calculating   $\MM(\mathcal{R}_r , \Pt)$. This is because $\Pt$ does not obey Leibniz rule and 
 \[ \MM(\mathcal{R}_r , \Pt) \not\cong \MM(\mathcal{R}, \Pt)^{\otimes r}. \]
 However we overcome this difficulty by producing an $\Lambda(\Q_1, \Pt)$-module isomorphism $\iota_*$ at the expense of forgetting the internal grading. 
 \end{rem}
\subsection{ $\Pt$ Margolis homology of  $(B\mathbb{Z}/2^{\times k})_+$}
 The space $B\mathbb{Z}/2$ is also known as $\mathbb{RP}^{\infty}$, the real infinite-dimensional projective space. It is well-known that 
 \[H^*((B\mathbb{Z}/2)_+, \FF_2) \iso \Ft[x]\] and therefore 
 \[ H^*((B\mathbb{Z}/2^{\times k})_+, \FF_2) \iso \Ft[x_1, \dots x_k]. \]
 It can be seen that $\Pt(x_i) = 0$ and $\Q_1(x_i) = x_i^4$. We again define the length function on the monomials in the usual way
 \[ L(x_1^{i_1}\dots x_k^{i_k}) = (i_1 \mod 2) + \dots + (i_k \mod 2).\]
 The even complex $\mathcal{E}$, which is the span of elements of length zero, is isomorphic to \[ \mathcal{E} = \Ft[x_1^2, \dots, x_k^2].  \]
 It can be seen that $\Pt(x_i^2) = x_i^8$. Now observe that $\Q_1$ acts trivially on $\mathcal{E}$, hence $\Pt$ acts as a derivation  and, therefore, 
 \[ \MM(\mathcal{E}, \Pt) \iso \Lambda(x_1^4, \dots, x_k^4).\]
 Now the length function gives us an increasing length filtration
 \[ G^p(\Ft[x_1, \dots, x_k]) =\Ft\langle x_1^{i_1} \dots x_k^{i_k} : L(x_1^{i_1} \dots x_k^{i_k}) \leq p \rangle .\]
 This results in a length spectral sequence which only has $d_0$ and $d_2$ differentials. If we denote  $x_i^4$ by $t_i$ for convenience, we can see that the length spectral sequence 
 \[ E_1^{\bullet }= \Lambda(t_1, \dots, t_k) \otimes \Lambda(x_1, \dots, x_k) \Rightarrow \MM((B\mathbb{Z}/2^{\times k})_+, \Pt)\] 
 is a sub spectral sequence of \eqref{LSS} and is, in fact, isomorphic to it when $k = \infty$. Thus, when $k$ is finite, we can recover a complete description of  $\MM((B\mathbb{Z}/2^{\times k})_+, \Pt)$ from Theorem~\ref{thm:basistmf}. More precisely, we obtain  
 \[ \MM((B\mathbb{Z}/2^{\times k})_+, \Pt) \iso \bigoplus \limits_{K \subset [k]} S_K\otimes  \MM(M_K, \Pt),\]
 where $S_K= \Lambda(t_ix_i\,|\, i \in [k]- K)$. $\MM((B\mathbb{Z}/2^{\times k})_+, \Pt)$ is a module over $\mathcal{S}_{[k]}$.
 \begin{ex} $\MM(\mathbb{RP}^{\infty}_+, \Pt) \cong \FF_2 \langle x_1, t_1, t_1x_1 \rangle$, where the internal degrees of $x_1$ and $t_1$ are $1$ and $4$ respectively and $\mathcal{S}_{[1]} = \Lambda(t_1x_1)$. Similarly, 
 \begin{eqnarray*}
  \MM((\mathbb{RP}^{\infty} \times \mathbb{RP}^{\infty})_+, \Pt)&\cong& \FF_2 \langle x_1,x_2,  t_1, t_2,  t_1x_1, t_2x_2, \\
 && \hspace{.5in} t_1x_2, t_2x_1, t_1x_1x_2, t_2x_2 x_1, t_1t_2x_2, t_1t_2 x_1 \rangle 
  \end{eqnarray*}
 where the internal degrees of $x_i$ and $t_i$ are $1$ and  $4$ respectively. Here $\mathcal{S}_{[2]} = \Lambda(t_1x_1, t_2x_2)$.  If we denote 
 \[ H^*((\mathbb{RP}^{\infty} \times \mathbb{RP}^{\infty})_+) \cong \Ft[y, z]\]
 where $|y|=|z|=1$, then one may choose $x_1 = [ x]$, $x_2 = [y]$, $t_1= [x^4]$ and  $t_2 = [y^4]$. 
  \end{ex}
 \bibliographystyle{alpha}
 \bibliography{P21.bib}
\end{document}